\theoremstyle{plain}
\newtheorem{thm}{Theorem}[section]
\newtheorem{lemma}[thm]{Lemma}
\newtheorem{prop}[thm]{Proposition}
\newtheorem{cor}[thm]{Corollary}
\theoremstyle{definition}
\newtheorem{defn}[thm]{Definition}
\theoremstyle{remark}
\newtheorem{remark}[thm]{Remark}
\newcommand{\nc}{\newcommand}
\def\makeop#1{\expandafter\def\csname#1\endcsname
  {\mathop{\rm #1}\nolimits}\ignorespaces}
\def\makebb#1{\expandafter\def
  \csname bb#1\endcsname{{\mathbb{#1}}}\ignorespaces}
\def\makebf#1{\expandafter\def\csname bf#1\endcsname{{\bf
      #1}}\ignorespaces} 
\def\makegr#1{\expandafter\def
  \csname gr#1\endcsname{{\mathfrak{#1}}}\ignorespaces}
\def\makescr#1{\expandafter\def
  \csname scr#1\endcsname{{\EuScript{#1}}}\ignorespaces}
\def\makecal#1{\expandafter\def\csname cal#1\endcsname{{\mathcal
      #1}}\ignorespaces} 
\def\doLetters#1{#1A #1B #1C #1D #1E #1F #1G #1H #1I #1J #1K #1L #1M
                 #1N #1O #1P #1Q #1R #1S #1T #1U #1V #1W #1X #1Y #1Z}
\def\doletters#1{#1a #1b #1c #1d #1e #1f #1g #1h #1i #1j #1k #1l #1m
                 #1n #1o #1p #1q #1r #1s #1t #1u #1v #1w #1x #1y #1z}
     \def\qed{\qedmark\medbreak}%
\def\qedmark{{\enspace\vrule height 6pt width 5pt depth 1.5pt}}%
\def\Gm{{{\bbG}_{\rm m}}}   
\def\Spec{{\rm Spec}\,}
\def\Fpbar{\overline{\bbF}_p}
\def\Fp{{\bbF}_p}
\def\Fq{{\bbF}_q}
\def\Qp{{\bbQ}_p}
\def\Zp{{\bbZ}_p}
\newcommand{\Z}{\mathbb Z}
\newcommand{\Q}{\mathbb Q}
\newcommand{\D}{\mathbf D}    % pro algebraic torus
\newcommand{\F}{\mathbb F}
\newcommand{\npr}{\noindent }
\newcommand{\<}{\langle}   %\< is not defined yet.
\renewcommand{\>}{\rangle} %\> is already defined.
\nc{\embed}{\hookrightarrow}
\newcommand{\ch}{characteristic }
\newcommand{\ac}{algebraically closed }
\newcommand{\dieu}{Dieudonn\'{e} }
\nc{\ol}{\overline}
\nc{\wt}{\widetilde}
\nc{\opp}{\mathrm{opp}}
\def\ul{\underline}
\begin{document}
\renewcommand{\thefootnote}{\fnsymbol{footnote}}
\setcounter{footnote}{-1}
\numberwithin{equation}{section}
%\numberwithin{section}{chapter}

%\usepackage[notref,notcite]{showkeys}

\title[Basic abelian varieties]
{Abelian varieties over finite fields as basic abelian varieties}
\author{Chia-Fu Yu}
\address{
Institute of Mathematics, Academia Sinica and NCTS (Taipei Office)\\
Astronomy Mathematics Building \\
No. 1, Roosevelt Rd. Sec. 4 \\ 
Taipei, Taiwan, 10617} 
\email{chiafu@math.sinica.edu.tw}
\address{
  The Max-Planck-Institut f\"ur Mathematik \\
  Vivatsgasse 7, Bonn \\
  Germany 53111} \email{chiafu@mpim-bonn.mpg.de}
%\date{June 23, 2000}

\date{\today}
\subjclass[2010]{} 
\keywords{}

%\subjclass[2000]{11R52,11R58}
%\keywords{mass formula, global function fields, central division
% algebras}  

\begin{abstract}
In this note we show that any basic abelian variety with additional
structures over an arbitrary \ac field of \ch $p>0$ is isogenous to
another one defined over a finite field. We also show
that the category of abelian varieties over finite fields up to
isogeny can be embedded into the category of basic abelian varieties
with suitable endomorphism structures. Using this connection, we
derive a new mass formula for a finite orbit 
of polarized abelian surfaces over a finite field.   
% after a finite field extension base change, becomes a basic abelian
% varieties with a suitable additional structure.  
\end{abstract} 

\maketitle

%\tableofcontents   % Table of Contents

\section{Introduction}
\label{sec:intro}
\def\Mass{{\rm Mass}}
In this note we work on abelian varieties over fields of
characteristic $p>0$, 
particularly on basic abelian varieties with additional
structures (endomorphisms, a polarization and a level
structure).
% Let $p$ be a prime number. 
Conceptually, an abelian variety with fixed
additional structures is \emph{basic} if the corresponding point in 
a moduli space of PEL-type over $\Fpbar$ lands in the minimal 
Newton stratum 
(Rapoport-Zink~\cite{rapoport-zink} and Rapoport~\cite{rapoport:np}). 
%over a field of \ch $p>0$ are defined 
%as the points in % the reduction modulo $p$ of 
%  which 
The group-theoretic definition was introduced by Kottwitz
\cite{kottwitz:isocrystals}.  
This notion is geometric
in the sense that an abelian variety with additional structures is basic
if and only if its base change to any algebraically closed field
extension is also basic. 
As isogenous abelian varieties land in the same Newton stratum,
% landing in the same Newton stratum is an isogeny
% property, 
an abelian variety with additional structures that is
isogenous to a basic one is also basic. 
% then so it is. abelian variety with additional structures is
% also basic.  

Let $B$ be a finite-dimensional semi-simple $\Q$-algebra with a
positive involution $*$ and $O_B$ an order in $B$ stable under $*$. 
A polarized $O_B$-abelian variety is a triple $(A,\lambda,\iota)$
where $A$ is an abelian variety with polarization $\lambda$ and $\iota:
O_B\to \End(A)$ is a ring monomorphism 
which is compatible with $\lambda$. 
We recall the definition of basic polarized $O_B$-abelian varieties
$(A,\lambda,\iota)$ in Section~\ref{sec:02}.

%is a pair $(A',\iota)$, where
%$A'$ is an abelian variety and $\iota:B\to \End^0(A')$ is a
%monomorphism of $\Q$-algebras.  
Basic abelian varieties with additional structures share many
similar properties with supersingular abelian varieties without 
additional structures, and many techniques employed there can be
carried over here as well. 
For example, similar to supersingular abelian
varieties, one can formulate a geometric mass for a finite orbit of
basic abelian varieties and relate this geometric mass to an
arithmetic mass defined by group theory.
%A typical example is 
The well-known Deuring-Eichler mass formula is obtained in this fashion.  
We refer to \cite{yu:smf} for more discussions in
this aspect. In this paper we prove the following result, which may
be regarded as another analogue property enjoyed 
by supersingular abelian varieties. 

\begin{thm}\label{11}
  Let $\ul A=(A,\lambda,\iota)$ be a basic polarized $O_B$-abelian
  variety over 
  an \ac field $k$ of \ch $p>0$. 
  Then there exists a polarized $O_B$-abelian variety $\ul
  A'=(A',\lambda',\iota')$ over a
  finite field $\kappa$ and an $O_B$-linear isogeny
  $\varphi:A'\otimes_\kappa k\to A$ over $k$ that preserves 
  the polarizations. 
\end{thm}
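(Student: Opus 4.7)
The plan is to produce the finite-field model $\ul{A}'$ directly, exploiting the rigidity of basic isocrystals with additional structure, and then to assemble the isogeny $\varphi$ from matching data at $p$ and at primes away from $p$.

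\textit{Step 1 (Construction of $\ul{A}'$ over a finite field).} Decompose $B = \prod_i B_i$ into simple factors, each stable under $*$. Using Honda--Tate theory together with $*$-compatible CM types, one constructs for each $i$ a polarized $O_{B_i}$-abelian variety over a sufficiently large finite field $\kappa \subset \Fpbar \subset k$ whose associated isocrystal has the prescribed Newton slope profile. Taking the product over $i$ produces a basic polarized $O_B$-abelian variety $\ul{A}' = (A',\lambda',\iota')$ over $\kappa$ of the same Newton type as $\ul{A}$. Equivalently, one may take $\ul{A}'$ to be any $\Fpbar$-point of the (nonempty) basic Newton stratum of the PEL moduli attached to $(B,*,O_B)$; since $\Fpbar$ is a filtered colimit of its finite subfields, such a point is automatically defined over a finite field.

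\textit{Step 2 (Matching the linear-algebra invariants).} By Kottwitz's classification of isocrystals with $G$-structure, where $G$ is the similitude group attached to the PEL datum $(B,*,O_B,\dots)$, a basic isocrystal with $G$-structure of a given Newton type is unique up to isomorphism. Applied to the rational Dieudonn\'e modules of $A$ and $A'\otimes_\kappa k$, this yields an isomorphism of isocrystals respecting both the $B\otimes\bbQ_p$-action and the polarization pairing. Away from $p$, the rational Tate modules $V^p(A)$ and $V^p(A'\otimes_\kappa k)$ are isomorphic as $B\otimes\A_f^p$-modules with alternating pairing of the prescribed PEL type, by the analogous classification at each $\ell\neq p$.

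\textit{Step 3 (Assembling $\varphi$).} Combining the matching data at $p$ and prime-to-$p$ yields a $B$-linear isomorphism of full rational Tate modules from $A'\otimes_\kappa k$ onto $A$ that preserves the polarization pairings up to a positive rational scalar. Via the equivalence between abelian varieties up to isogeny and their rational Tate modules with additional structure (Dieudonn\'e theory at $p$ combined with Tate's theorem at each $\ell\neq p$), this translates into a $B$-linear quasi-isogeny; clearing denominators and rescaling $\lambda'$ within its positive-rational equivalence class of polarizations produces the desired $O_B$-linear polarization-preserving isogeny $\varphi: A'\otimes_\kappa k \to A$. The principal technical obstacle is this final assembling step: gluing the local isomorphisms into a global polarization-preserving one, which amounts to a Hasse-principle statement for $G$; in the basic PEL setting it follows from Kottwitz's description of basic isogeny classes.
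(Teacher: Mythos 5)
Your Step 3 is where the argument breaks down, and the gap is not a technicality — it is exactly the substance of the theorem. You claim that a $B$-linear, polarization-compatible isomorphism of the rational Tate modules (at all $\ell\neq p$) and of the isocrystals (at $p$) "translates into a $B$-linear quasi-isogeny" from $A'\otimes_\kappa k$ to $A$. No such equivalence exists over a general algebraically closed field $k$. Tate's theorem and the Tate conjecture for abelian varieties hold over finite fields, and by a colimit argument over $\Fpbar$; over a field $k$ of large transcendence degree, two abelian varieties (even two basic ones) can have isomorphic Tate modules with all the PEL decoration and yet fail to be isogenous, because the endomorphism algebra of a generic $A$ is small. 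The Rapoport--Zink statement you implicitly invoke (their Corollary 6.29, which underlies Proposition~\ref{36} of this paper) is proved only over $\Fpbar$, precisely by reduction to finite fields; and in this paper Proposition~\ref{36} over general $k$ is deduced \emph{from} Theorem~\ref{11}, so using it to prove Theorem~\ref{11} would be circular. "Kottwitz's description of basic isogeny classes" is a statement about isocrystals and Galois cohomology, not a Tate-conjecture statement about abelian varieties over arbitrary $k$, so it cannot supply the missing equivalence.

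There is also a smaller imprecision in Step 2: for basic $\sigma$-conjugacy classes, the Newton point alone does not determine $[b]$; one must also match the Kottwitz invariant $\kappa(b)$, so the statement "a basic isocrystal with $G$-structure of a given Newton type is unique up to isomorphism" needs to be refined (and the choice of $A'$ in Step 1 adjusted accordingly). But this is repairable; the fatal problem is Step 3.

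The paper's route is genuinely different and avoids this trap. It does not try to descend or match Tate modules directly. Instead it shows that a \emph{basic} $\ul A$ over any algebraically closed $k$ has sufficiently many complex multiplications (smCM): spreading $\ul A$ out over a scheme $S$ of finite type over $\Fq$, it uses Lemmas~\ref{32} and~\ref{33} to show that, at every closed point $s$ of $S$, a fixed power $\pi^n_{A_s}$ of the Frobenius lies in $\iota(F)$ and hence in the center of the relevant unitary similitude group. Density of Frobenius elements in $\pi_1(S,\bar\eta)$ then forces the $\ell$-adic monodromy to be central (after a finite base change), and Zarhin's semisimplicity and bicommutant theorems convert this into the smCM property. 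Grothendieck's theorem (every abelian variety with smCM over an algebraically closed field of positive characteristic is isogenous to one over a finite field) then produces $A'$ over $\kappa$, and the $O_B$-action and polarization are transported along the isogeny. The "basic" hypothesis is used exactly where your proposal needs it but never establishes it: to force the monodromy to be central, and hence to create enough endomorphisms. If you want to keep the flavor of your approach, you would need an independent proof of the Tate conjecture for basic objects over arbitrary algebraically closed $k$, but that is essentially equivalent to the theorem being proved.
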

% which asserts
% that any basic varieties can be constructed  
% of  basic abelian varieties with additional structures 

% For example, 
% the number of geometrically
% irreducible components of the supersingular locus of the mod $p$
% $\calA_{g}\otimes \Fpbar$ of the 
% Siegel modular variety is a class number of certain quaternionic
% Hermitian forms, due to K.-Z.~Li and Oort \cite{li-oor}. 

% there are only finitely many isogeny classes of
% them over an \ac field.  
% They have sufficiently many complex multiplications 
% after a finite field extension base change; 
% that is, the endomorphism algebra
% of such an abelian variety $A$ contains a commutative semi-simple
% $\Q$-subalgebra of degree $2 \dim A$. There is a close relation
% between isomorphism classes of basic abelian varieties and double
% coset classes of the algebraic group of automorphisms of abelian
% varieties in question.  
% The Rapoport-Zink uniformization theorem 
% \cite[Theorem 6.30]{rapoport-zink} shows that any basic abelian
% variety with additional structures, after a finite field extension base
% change, is isogenous to another one which has a model defined over a
% finite field.  
% In particular, by a theorem of
% Grothendieck, up to a finite field extension and up to an isogeny,
% any basic abelian variety with additional structures is descend to one
% defined over a finite field. 

The second part of this note studies the converse to 
Theorem~\ref{11}. 
We show that any abelian variety over a finite field can be regarded
as a basic abelian variety with suitable endomorphism structures. More
precisely, if $A$ is an abelian variety over the finite field $\Fq$ of
$q=p^s$ elements and 
$F=\Q(\pi_A)\subset \End(A)\otimes \Q$ is the $\Q$-subalgebra 
generated by its Frobenius
endomorphism $\pi_A$, then the abelian variety $A$ together 
with the $F$-action is a basic $F$-abelian
variety (Proposition~\ref{41}). See Remark~\ref{rem:32} for 
the notion of a $B$-abelian variety being basic.
%definition of basic $B$-abelian varieties. 
A priori, the original definition of
basic abelian varieties with  
additional structures requires both structures of 
endomorphisms and polarizations. 
However, similar to supersingular abelian varieties, 
polarizations play no role in the characterization 
of supersingularity.

Let $\calA_{\Fq}$ denote the category of abelian varieties over $\Fq$
up to isogeny, and $\calB^{\rm rig}$ be the category of basic
abelian varieties with rigidified endomorphisms over
$\Fpbar$ up to isogeny, defined in Section~\ref{sec:04}. 
We prove the following result.

\begin{thm}\label{12}
  There is a functor $\Phi$ that embeds the category $\calA_{\Fq}$
  as a full subcategory of $\calB^{\rm rig}$. 
\end{thm}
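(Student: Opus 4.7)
The plan is to define $\Phi$ on objects by sending $A \in \calA_{\Fq}$ to the pair $(A \otimes_{\Fq} \Fpbar, \pi_A)$, where $\pi_A$ denotes the $q$-power Frobenius endomorphism, viewed as a distinguished generator of the subfield $F = \Q(\pi_A) \subset \End(A \otimes_{\Fq} \Fpbar) \otimes \Q$. By Proposition~\ref{41}, this pair is a basic $F$-abelian variety, and the distinguished generator $\pi_A$ supplies the rigidification, so $\Phi(A)$ is indeed an object of $\calB^{\rm rig}$. On morphisms, set $\Phi(f) = f \otimes_{\Fq} \Fpbar$. Because $f$ is defined over $\Fq$, its base change commutes with $\pi_A$ and $\pi_{A'}$, so $\Phi(f)$ intertwines the distinguished elements and is a morphism in $\calB^{\rm rig}$. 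Functoriality is then automatic.

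To show $\Phi$ is fully faithful, the key identification to establish is
$$\Hom_{\Fq}(A, A') \otimes \Q = \{\, g \in \Hom_{\Fpbar}(A \otimes \Fpbar, A' \otimes \Fpbar) \otimes \Q : g \circ \pi_A = \pi_{A'} \circ g \,\},$$
whose right-hand side, by the very definition of $\calB^{\rm rig}$, is $\Hom_{\calB^{\rm rig}}(\Phi(A), \Phi(A'))$. Faithfulness of the base-change functor gives injectivity of $\Phi$ on $\Hom$-sets. The nontrivial inclusion $\supseteq$ in the displayed identity is Galois descent: a homomorphism over $\Fpbar$ descends to $\Fq$ if and only if it is fixed by $\Gal(\Fpbar/\Fq)$, and the action of the topological generator (the $q$-Frobenius of $\Fpbar/\Fq$) on the $\Hom$-space between base changes from $\Fq$ is precisely conjugation by $\pi_A$ on the source and $\pi_{A'}$ on the target; hence Galois-invariance is the intertwining condition in the display.

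The main technical point is this Galois-descent identification. It can be derived directly by unwinding the $\Gal(\Fpbar/\Fq)$-action on isogeny-class morphisms of base-changed abelian varieties, or equivalently by invoking Tate's theorem, which identifies $\Hom_{\Fq}(A, A') \otimes \Ql$ with the subspace of $\Hom_{\Ql}(T_\ell A \otimes \Ql, T_\ell A' \otimes \Ql)$ commuting with the Frobenius operators, combined with the fact that $\Fpbar$-homomorphisms between base-changed abelian varieties are faithfully recorded by their $\ell$-adic realizations. Once this descent step is in hand, the theorem follows without further difficulty.
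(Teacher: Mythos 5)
Your proposal is correct and follows essentially the same route as the paper: the functor $A\mapsto (\Q(\pi_A),\pi_A,\ol A,\iota)$ with Proposition~\ref{41} supplying basicness, and fullness via Galois descent, noting that $\sigma_q$ acts on $\Hom(\ol A_1,\ol A_2)\otimes\Q$ by $\ol f\mapsto \pi_2\,\ol f\,\pi_1^{-1}$, so the intertwining condition defining morphisms in $\calB^{\rm rig}$ is exactly Galois-invariance. The appeal to Tate's theorem is an optional alternative phrasing, not needed for the argument.
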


Theorem~\ref{12} connects (polarized) abelian varieties
over a finite field $\Fq$ with basic (polarized) $F$-abelian 
varieties over $\Fpbar$ equipped with  a suitable commutative 
semi-simple $\Q$-algebra $F$. 
%One can use this to study (polarized) abelian varieties
%over finite fields within an isogeny class. 
This connection is particularly useful when the $\Q$-algebra $F$ is fixed. 
In this case one may consider a smaller class of (polarized) 
abelian varieties over $\Fq$ 
whose endomorphism rings
contain the maximal order $O_F$. 
%Then we can embed 
This smaller set of isomorphism classes 
of polarized abelian varieties over $\Fq$ 
%a certain class of
%polarized abelian varieties over $\Fq$ 
is embeddable into the basic locus of a moduli
space of polarized $O_F$-abelian varieties; see Lemma~\ref{51} and
(\ref{eq:52}). Below is a example where we use this
embedding to derive a mass formula for a class of polarized abelian
surfaces over $\Fp$. 

Choose a simple abelian variety $A_0$ over the prime finite field 
$\Fp$ whose Frobenius endomorphism $\pi_0$ satisfies that
$\pi_0^2=p$. Then $A_0$ is a superspecial abelian surface, i.e. the base
change $A_0\otimes \Fpbar$ is isomorphic to the product of two
supersingular elliptic curves.   
Let us consider the set $\Lambda$ 
of isomorphism classes of  
principally polarized simple abelian surfaces $(A,\lambda)$ over
$\Fp$ such that $A$ is isogenous to $A_0$. 
Put $F=\Q(\pi_0)=\Q(\sqrt{p})$ and
$O_F$ its ring of integers. Let $\Lambda^{\rm max}\subset \Lambda$ be the
subset of classes $[(A,\lambda)]$ such that $O_F\subset \End(A)$.
%whose Frobenius endomorphism $\pi_A$ satisfying
%$\pi_A^2=p$. 
We can show that $\Lambda^{\rm max}$ is a nonempty set. 
As usual, the mass $\Mass(\Lambda^{\rm max})$ of $\Lambda^{\rm max}$ 
is defined by
\begin{equation}
  \label{eq:11}
  \Mass(\Lambda^{\rm max}):=\sum_{(A,\lambda)\in \Lambda^{\rm max}}
|\Aut(A,\lambda)|^{-1}.
\end{equation}
Then we show that $\Mass(\Lambda^{\rm max})$ 
is equal to the mass of a finite Hecke orbit $S$ 
in the superspecial
locus of a Hilbert modular surface modulo $p$. Furthermore, 
using the geometric
mass formula for the superspecial orbits established in 
\cite{yu:mass_hb}, we obtain the mass formula
\begin{equation}
  \label{eq:12}
  \Mass(\Lambda^{\rm max})=\frac{\zeta_{F}(-1)}{4},
\end{equation}
where $\zeta_F(s)$ the Dedekind zeta
function of $F$ (see Section~\ref{sec:52}). 

The paper is organized as follows. In Section~\ref{sec:02} we recall
the definition of basic abelian varieties with additional structures.
The proof and some consequences of Theorem~\ref{11} are given 
in Section~\ref{sec:03}. In Section~\ref{sec:04} we show that any
abelian variety over a finite field, together with the action of the
center of its endomorphism algebra, is a basic abelian variety. This
result is used to construct the functor $\Phi$ in Theorem~\ref{12}.
%This 
%connection allows us to make Theorem~\ref{12}. 
In the last section we
consider the isogeny class of simple supersingular abelian surfaces 
mentioned as above and compute the
associated mass (\ref{eq:11}). \\
% by a geometric mass formula in
% \cite{yu:mass_hb}.      \\ 

% In Section 2 we discuss basic abelian $B$-varieties, where $B$ is as
% above. The original definition of basic abelian varieties with 
% additional structures require both endomorphisms and polarizations. 
% However, like supersingular abelian varieties, 
% polarizations play no role in the supersingularity.
% It does make sense to define what basic abelian $B$-varieties
% are without referring to moduli spaces. 
% Note that the moduli space of abelian $B$-varieties are usually not 
% represented by an algebraic space.  

% Theorem~\ref{11} is proved in Section 3. The idea is to relate to
% abelian $F_n$-varieties with abelian $F_n$-varieties which are ``of
% arithmetic type'' considered in. Then we use a result in [] relating
% abelian varieties which are of arithmetic type and basic abelian
% varieties; see Section 3 for details. \\

\npr {\bf Notation.} If $M$ is a $\Z$-module or a $\Q$-module and
$\ell$ is a prime, %For any prime $\ell$ and any $\Z$-module $M$ or
%any $\Q$-module $N$, 
we write $M_\ell:=M\otimes_{\Z} \Z_\ell$ or $M_\ell=M\otimes_{\Q} \Q_\ell$,
respectively.  
For any perfect field $k$ of \ch $p>0$,
denote by $W(k)$ the ring of Witt vectors over $k$, $B(k)$ the
field of fractions of $W(k)$, $\sigma$ the Frobenius map on $W(k)$ and
$B(k)$ induced by $\sigma:k\to k,\ x\mapsto x^p$. If $F$ is a finite
product of number fields $F_i$, denote by $O_F$ the maximal order in
$F$. A prime $\bfp$ of $F$ over $p$, denoted by $\bfp|p$,
means a prime of $F_i$ for some $F_i$ or a prime ideal of $O_F$ over $p$.
For an abelian variety $A$ over a field $k$, write $\End(A)=\End_k(A)$
for the endomorphism ring of $A$ over $k$ and
$\End^0(A)=\End(A)\otimes_\Z \Q$ for the endomorphism algebra of $A$
over $k$. If $A$ is defined over a finite field $\Fq$, we denote by
$\pi_A$ the Frobenius endomorphism of $A$ over $\Fq$.  

\section{Basic abelian varieties with additional structures}
\label{sec:02}

In this section we recall the concept of 
basic abelian varieties with additional
structures introduced by Kottwitz \cite{kottwitz:isocrystals}. 
Our reference is  
 %kottwitz:isocrystals2} 
Rapoport-Zink \cite[p.11, p.~281 and 6.25, p.~291]{rapoport-zink}.

\subsection{Setting}
\label{sec:21}
\def\GAut{{\rm GAut}}
\def\char{{\rm char}}
\def\GU{{\rm GU}}

Let $B$ be a finite-dimensional semi-simple algebra over $\Q$ with a
positive involution $*$, and $O_B$ be an arbitrary order of $B$ 
stable under $*$. 

Recall that a non-degenerate
$\Q$-valued skew-Hermitian $B$-space  is a pair $(V,\psi)$ where 
$V$ is a left faithful finite $B$-module, and 
$\psi:V\times V\to \Q$ is a non-degenerate alternating pairing
such that $\psi(bx,y)=\psi(x,b^* y)$ for all $b\in B$ and 
all $x,y\in V$.

A {\it polarized $O_B$-abelian variety} (resp.~{\it polarized 
$B$-abelian variety}) is a
triple $\ul A=(A,\lambda,\iota)$, where $(A,\lambda)$ is a polarized
abelian 
variety and $\iota:O_B\to \End(A)$ (resp. $\iota:B\to
  \End^0(A)$) 
is a ring monomorphism such that
$\lambda \iota(b^*)=\iota(b)^t \lambda$ for all $b\in O_B$. Here 
$\iota(b)^t:A^t\to A^t$ denotes the dual morphism of $\iota(b)$.

Let $\ul A$ be a polarized $O_B$-abelian variety over $k$, where $k$
is an arbitrary field. 
For any prime $\ell$ (not necessarily invertible in $k$), 
we write $\ul A(\ell)$ for the associated
$\ell$-divisible group with additional structures $(A[\ell^\infty],
\lambda_\ell, \iota_\ell)$, where $\lambda_\ell$ is the induced
quasi-polarization from $A[\ell^\infty]$ to
$A^t[\ell^\infty]=A[\ell^\infty]^t$ (the Serre dual), 
and $\iota_\ell:(O_B)_\ell \to \End(A[\ell^\infty])$
% (resp. $\iota_\ell: B_\ell \to \End(A[\ell^\infty])\otimes \Q_\ell$)
the induced ring monomorphism. If $\ell\neq \char (k)$, let
$T_\ell(A)$ denote the $\ell$-adic Tate module of $A$,  
$V_\ell:=T_\ell(A)\otimes \Q_\ell$, and 
let
\begin{equation}
  \label{eq:21}
  \rho_\ell: \calG_k\to \GU_{B_\ell}(V_\ell,e_{\lambda})
\end{equation}
be the associated Galois representation. Here 
$\calG_k:=\Gal(k_s/k)$ is the Galois group of $k$, 
$k_s$ a separably closure of $k$, 
and 
\[ \GU_{B_\ell}(V_\ell,e_{\lambda}):=\{g\in \Aut_{B_\ell}(V_\ell)\,|\,
e_{\lambda}(gx,gy)=c\, e_{\lambda} (x,y)\ 
\text{for some $c\in \Q_\ell^\times$}\,
  \} \]
is the group of $B_\ell$-linear similitudes with respect to the 
Weil pairing 
\[ e_{\lambda}=e_{\lambda,\ell}: T_\ell(A)\times T_\ell(A) \to
\Z_\ell(1), \] 
where 
\[\Z_\ell(1) :=\lim_{\leftarrow} \mu_{\ell^m}(k_s) \] 
is the Tate twist. 

If $k$ is a perfect field of \ch $p$, let $M(\ul A)$ denote the
covariant \dieu module of $\ul A$ with the additional structures and
put $N(\ul A):=M(\ul A)\otimes_{W(k)} B(k)$, the rational \dieu module
(or the isocrystal) with the additional structures; 
see \cite[Section 1]{yu:reduction}.

% We refer to \cite[Section 1]{yu:reduction}
% for \dieu modules and \dieu modules with additional structures. 

In this note we consider only the objects $\ul A=(A,\lambda,\iota)$ 
for which there is a non-degenerate skew-Hermitian $B$-space $(V,\psi)$
with $2 \dim A = \dim_\Q V$. Namely, we require that there exists a 
complex polarized $O_B$-abelian variety with the same dimension as $A$. 
%the dimension $g$ of $A$ has the property that there exists a
%$g$-dimensional complex polarized $O_B$-abelian variety. 
For example, we exclude the
case where $A$ is a supersingular elliptic curve and $B$ is the
quaternion $\Q$-algebra ramified precisely at $\{p,\infty\}$.   

\subsection{Basic abelian varieties}

Let $k$ be any field of \ch $p$ and $\bar k$ be an algebraic
closure of $k$. Put $W:=W(\bar k)$ and $L:=B(\bar k)$. 
Let $(V_p,\psi_p)$ be a $\Q_p$-valued non-degenerate skew-Hermitian
  $B_p$-module. 
  A polarized $O_B$-abelian
  variety $\ul A$ over $\bar k$ is said
  to be {\it related to $(V_p,\psi_p)$} if there is a 
  $(B_p\otimes_{\Qp} L)$-linear isomorphism $\alpha:N(\ul A) \simeq
  (V_p,\psi_p)\otimes_{\Qp} L$ which preserves the pairings for 
  a suitable identification $L(1)\simeq L$.

  Let $G_p:=\GU_{B_p}(V_p,\psi_p)$ be the algebraic
  group over $\Q_p$ of $B_p$-linear similitudes with respect to the
  pairing $\psi_p$. 
  A choice of $\alpha$ gives rise to
  an element $b\in G_p(L)$ by transport of structure of the Frobenius
  map on $N(\ul A)$, that is, 
  $\alpha: N(\ul A) \simeq (V_p\otimes L, b({\rm id}\otimes \sigma),
  \psi_p)$ becomes an isomorphism of
  isocrystals with additional structures. The $\sigma$-conjugacy class
  $[b]$ of $b$ in
  $G_p(L)$ is independent of the choice of $\alpha$. 
  The decomposition of $V_p\otimes L$ into
  isotypic components (the components of a single slope) 
  induces a $\Q$-graded structure, and thus
  defines a (slope) homomorphism $\nu_{b}:\D\to G_p$ over some
  unramified finite
  extension $\Q_{p^s}$ of $\Q_p$, where $\D$ is the pro-torus over
  $\Q_p$ with character group $\Q$. The set $\nu_{[b]}=\{\nu_b\}$ for
  $b\in [b]$ 
  is the $G_p(L)$-conjugacy class of $\nu_b$ for a single $b\in [b]$,
  called the Newton vector associated to $N(\ul A)$.     

\begin{defn} \label{def:basic}

{\rm (1)} A polarized $O_B$-abelian variety $\ul A$ over $\bar k$ 
  is said to be {\it
  basic with respect to $(V_p,\psi_p)$} if 
\begin{enumerate}
\item [(a)] $\ul A$ is related to $(V_p,\psi_p)$, and
\item [(b)] the slope homomorphism $\nu_{b}: \D\to G_p$ for $b\in [b]$
  is central.     
\end{enumerate}

{\rm (2)} The object $\ul A$ over $\bar k$ is said to be 
  {\it basic} if it is basic
  with respect to $(V_p,\psi_p)$ for some non-degenerate
  skew-Hermitian $B_p$-space $(V_p,\psi_p)$.  

{\rm (3)}  A polarized $O_B$-abelian variety $\ul A$ over any field
  $k$ is said to be {\it basic} if its base change $\ul A\otimes_k
  \bar k$ is basic.
\end{defn}

% {\rm (2)} A polarized abelian $O_B$-variety $\ul A$ is called {\it
%   basic with respect to $(V_p,\psi_p)$} if 

% (a) $\ul A$ is related to $(V_p,\psi_p)$, and

% (b) the slope homomorphism $\nu$ is central. 
% factors through the center $Z(G')$ of $G'$.   

% {\rm (3)}  $\ul A$ is called {\it basic} if it is basic
%   with respect to $(V_p,\psi_p)$ for some skew-Hermitian space
%   $(V_p,\psi_p)$.  

Clearly a polarized $O_B$-abelian variety $\ul A$ is basic if (and only
if) it is so considered as polarized $B$-abelian variety. 
Two polarized $B$-abelian varieties $\ul A_1$ and $\ul A_2$ are
said to be {\it isogenous}, denote $\ul A_1 \sim \ul A_2$,  
if there is a $B$-linear isogeny $\varphi: A_1\to
A_2$ such that the pull-back $\varphi^* \lambda_2$ is a $\Q$-multiple of
$\lambda_1$. Clearly the property for an object $\ul A$ being basic 
is an isogeny invariant property. 
From the definition it is also easy to see that this is a geometric
notion: an object $\ul A=(A,\lambda,\iota)$
over $k$ is basic if and only if the base change 
$\ul A \otimes_k k_1$ is basic for any \ac field $k_1 \supset k$. 

\section{Proof of Theorems~\ref{11} and its corollaries}
\label{sec:03}

\subsection{}\label{sec:31} 
To prove Theorem~\ref{11}, we need some properties of basic abelian
  varieties with additional structures. 
Let $(V,\psi)$ be a  
non-degenerate ($\Q$-valued) skew-Hermitian $B$-space and let
$G:=\GU_B(V,\psi)$ be the algebraic group over $\Q$ of $B$-linear
similitudes with respect to the pairing $\psi$. 

Let $F$ be the center of $B$ and
$F_0$ be the $\Q$-subalgebra fixed by the induced 
involution on $F$, which we 
denote by $a\mapsto \bar a$. 
%A prime $\bfp$ of $F$ over $p$, denoted by $\bfp|p$, means a prime of
%some field $F_i$ over $p$ in the decomposition $F=\prod_i F_i$ into a
%product of fields. 
%Write $F=\prod_i F_i$ into a product of
%fields. A prime of $F$ over $p$ means a prime of $F_i$ over $p$
% over
Let $\Sigma_p$ be the set of primes  
$\bfp$ of $F$ over $p$, and for each prime $\bfp|p$, 
denote by $\ord_\bfp$ the
corresponding $p$-adic valuation normalized in a way 
that $\ord_\bfp(p)=1$. 
% Let $e_\bfp$ and $f_\bfp$ be the ramification
% index and the inert degree of $\bfp$. 
Let $F_p:=F\otimes \Q_p=\prod_{\bfp|p} F_\bfp$ be the decomposition 
into a product of local fields. For each isocrystal $N$ with an
$F_p$-linear action, let 
\begin{equation}
  \label{eq:N}
  N=\oplus_{\bfp|p} N_\bfp
\end{equation}
be the decomposition with respect to the $F_p$-action.

\begin{lemma}[Rapoport-Zink]\label{31} Let the notation be as above. 

{\rm (1)} The center $Z$ of $G$ is the algebraic subgroup over $\Q$
      whose 
      group of $R$-points is 
    \[ Z(R)=\{x\,\in (F\otimes R)^\times;
    \, x \bar x\in R^\times \,\},  \]
   for any $\Q$-algebra $R$.

{\rm (2)} Let $N$ be an isocrystal with additional structures and
suppose that it is related to $(V\otimes \Q_p,\psi)$. 
Then $N$ is basic with respect to  $(V\otimes \Q_p,\psi)$ 
if and only if each component
$N_\bfp$ is isotypic. In particular, if $N$ is basic, then $N_\bfp$ is
supersingular for primes $\bfp$ with $\bfp=\bar \bfp$. 
\end{lemma}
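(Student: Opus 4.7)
I would handle the two parts in sequence, using the concrete description of $G$ as an algebraic group of $B$-linear similitudes.

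For part (1), the first step is to identify the centralizer of $B$ in $\GL(V)$. Since $V$ is a faithful left $B$-module and $B$ is semi-simple, a standard argument shows that the center of $\GL_B(V)$, as an algebraic group over $\Q$, is the subgroup whose $R$-points are the $(F\otimes R)^\times$-scalars, where $F$ is the center of $B$. Any central element of $G$ must in particular commute with $\GL_B(V)$, and hence lies in such an $F$-scalar. Imposing the similitude condition $\psi(xv,xw)=c\,\psi(v,w)$ together with $\psi(bv,w)=\psi(v,b^*w)$ forces $c=x\bar x\in R^\times$, giving exactly the described subgroup; it remains to verify that, conversely, every such element is central in $G$, which is immediate from the $B$-linearity of elements of $G$.

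For the first half of part (2), the key observation is that by part (1), $Z$ is a subtorus of $\Res_{F_p/\Q_p}\Gm$ cut out by the condition $x\bar x\in\Gm$. A cocharacter $\nu:\D\to G_p$ is central if and only if it factors through $Z$; after extension of scalars this amounts to a homomorphism $\D\to (F_p\otimes L)^\times$, which assigns a single rational slope $\lambda_\bfp$ to each prime $\bfp\mid p$. Under the decomposition $N=\oplus_\bfp N_\bfp$, the slope homomorphism attached to $b$ then acts on $N_\bfp\otimes L$ by the scalar of slope $\lambda_\bfp$, which is exactly the statement that $N_\bfp$ is isotypic. Conversely, if each $N_\bfp$ is isotypic of some slope $\lambda_\bfp$, the resulting slope cocharacter is $F_p$-linear, hence factors through $Z$ and is central in $G_p$.

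For the last assertion, I would use that the compatibility $\psi(bx,y)=\psi(x,b^*y)$ together with $*|_F:a\mapsto\bar a$ forces the pairing to vanish on $N_\bfp\times N_\bfq$ unless $\bfq=\bar\bfp$. Hence for $\bfp=\bar\bfp$ the pairing restricts to a non-degenerate alternating form on $N_\bfp$, and the fact that Frobenius is a similitude (with a scalar multiplier of slope $1$, via the identification $L(1)\simeq L$) implies that the contragredient of the isotypic component of slope $\lambda$ is of slope $1-\lambda$; the resulting self-duality of $N_\bfp$ forces $\lambda=1-\lambda$, so $\lambda=1/2$ and $N_\bfp$ is supersingular.

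The main obstacle I expect lies in part (1): one has to pin down the center of $\GL_B(V)$ carefully, using faithfulness of $V$ as a $B$-module, in order to rule out larger candidates when $B$ is non-commutative. Everything else reduces to a clean translation between centrality of the Newton cocharacter and the slope decomposition along $F_p=\prod_\bfp F_\bfp$.
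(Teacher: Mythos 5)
The paper itself proves very little here: it cites Rapoport--Zink (6.25) for part (1) and for the ``only if'' half of part (2), and supplies a short argument only for the ``if'' direction (isotypic on each $\bfp$-component $\Rightarrow$ $\nu_b$ factors through a central torus). Your proposal instead gives a self-contained account of the whole lemma, so the two are genuinely different in scope; your ``if'' argument coincides with the paper's in substance, and your slope-duality argument for the final ``supersingular at $\bfp=\bar\bfp$'' clause is a clean and standard way to make precise what the paper leaves as a citation.

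There is, however, a real gap in your part (1). You write that any central element of $G$ ``must in particular commute with $\GL_B(V)$,'' but this inference goes the wrong way: a central element of $G$ commutes with $G$, and $G$ is a \emph{proper} subgroup of $\GL_B(V)$, so there is no a priori reason it should commute with all of $\GL_B(V)$. The direction you can conclude trivially is the opposite inclusion $Z(\GL_B(V))\cap G\subset Z(G)$. What you actually need is that the centralizer of $G$ (equivalently, of the derived group $G^{\rm der}$, or of the unitary group $\mathrm{U}_B(V,\psi)$) inside $\End_B(V)$ is exactly $F$. That requires a separate argument: for instance, show that each isotypic piece of $V$ under the $B$-action remains irreducible as a $G^{\rm der}$-module, then apply Schur/double-centralizer; or invoke the known classification of the classical similitude groups $\GU_B$ to identify the center directly. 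Once that centralizer computation is in place, your derivation of the similitude constraint $c=x\bar x\in R^\times$ is correct. The rest of part (2) --- the translation between centrality of $\nu_b$ and single slope on each $N_\bfp$, and the pairing-theoretic argument that $\psi$ couples $N_\bfp$ only with $N_{\bar\bfp}$ and forces $\lambda_\bfp+\lambda_{\bar\bfp}=1$ --- is correct as written.
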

\begin{proof}
  Statement (1) and the only if part of statement (2) 
  are proved in 6.25 of
  \cite{rapoport-zink}. The if part is easier: as each $N_\bfp$ is
  isotypic, say of slope $r_\bfp/s$, the slope homomorphism $s\nu_b$
  factors through $\bfD\to \Gm$ and the action of $s \nu_b(p)$ on
  $N_\bfp$
  is a scalar. Thus, the slope homomorphism $\nu_b:\bfD\to G_p$ 
  must be central. \qed 
\end{proof}

%\begin{lemma}\label{53}
%   Let $H$ be a quasi-polarized $p$-divisible $O_{B_p}$-group. Suppose
%   that the associated isocrystal $N:=M\otimes \Q_p$ is related to
%   $(V\otimes \Q_p,\psi)$. Then the condition (a) of (\ref{32}) is
%   satisfied for $H$ 
%   if and only if each component $N_\bfp$ is isotypic. Therefore, $N$
%   is basic if and only if it satisfies the
%   condition (a). 
%\end{lemma}
%\begin{proof}
%  This follows immediately from the dimensions of the endomorphism
%  algebras.  \qed
%\end{proof}

\begin{remark}\label{rem:32}
Lemma~\ref{31} provides a simple criterion for checking a
polarized $B$-abelian variety $\ul A=(A,\lambda,\iota)$ being basic. 
Note that the assertion of the statement (2)  
depends only on the underlying structure of
$B$-action, and not on the equipped polarization structure. 
Therefore, it makes sense to call a $B$-abelian variety 
$(A,\iota)$ {\it basic} 
if for any $B$-linear polarization $\lambda$ on $(A,\iota)$, 
the polarized $B$-abelian
variety $(A,\lambda,\iota)$ is basic in the sense of 
Definition~\ref{def:basic}. 
Such a polarization $\lambda$ always exists; see Kottwitz \cite[Lemma
9.2]{kottwitz:jams92}.
 
% By a result of Kottwitz \cite[Lemma 9.2]{kottwitz:jams92},
%  $B$-abelian variety $(A,\iota)$ always 
% admits a $B$-linear polarization.

It follows from Lemma~\ref{31} that a $B$-abelian variety $(A,\iota)$
is basic if and only if the $F$-abelian variety $(A,\iota|_F)$ is
basic, where % $F$ is the center of $B$ and 
$\iota|_F$ is the
restriction of $\iota$ to $F$.         
\end{remark}

The following two lemmas are reorganized from
\cite[6.26-6.29]{rapoport-zink}; proofs are provided solely for the
reader's convenience. 

\begin{lemma}\label{32}
  Given any set $\{\lambda_\bfp\}_{\bfp|p}$ of rational numbers with
  $0\le \lambda_{\bfp}\le 1$ and $\lambda_{\bfp}+\lambda_{\bar
  \bfp}=1$, there is a positive integer $s$ and $u\in
  O_F[1/p]^\times$ such that 
  \begin{equation}
    \label{eq:u}
   u \ol u=q:=p^s,\quad\text{and }\ \ord_{\bfp} u=s \lambda_\bfp, \
  \forall\, \bfp\in 
  \Sigma_p.  
  \end{equation}
% where $q=p^s$. 
\end{lemma}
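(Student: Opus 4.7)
The plan is to construct $u$ in three stages: first build an element $v \in O_F[1/p]^\times$ with the prescribed $\bfp$-adic valuations up to a common scaling factor, then identify $v\bar v/p^{N}$ as a unit in $O_{F_0}$, and finally correct $v$ by a norm to force $u\bar u = p^s$ on the nose. For the first stage, choose a positive integer $s_0$ with $s_0\lambda_\bfp \in \Z$ for all $\bfp | p$, and form the fractional ideal $J = \prod_{\bfp|p} \bfp^{s_0\lambda_\bfp}$ of $O_F$. Since the ideal class group of $O_F$ is finite, some power $J^m$ is principal, say $J^m = (v)$ with $v \in F^\times$; as $J$ is supported on primes above $p$, this forces $v \in O_F[1/p]^\times$ and $\ord_\bfp v = m s_0 \lambda_\bfp$ for every $\bfp | p$.

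For the second stage, one has $\ord_\bfp \bar v = \ord_{\bar\bfp} v = m s_0 \lambda_{\bar\bfp}$, so the hypothesis $\lambda_\bfp + \lambda_{\bar\bfp} = 1$ gives $\ord_\bfp(v\bar v) = m s_0$ for every $\bfp | p$; combined with $v\bar v$ being an $S$-unit at primes not dividing $p$, this shows $\epsilon := v\bar v/p^{ms_0}$ lies in $O_F^\times$. The symmetry $\overline{v\bar v} = v\bar v$ further places $\epsilon$ in $O_{F_0}^\times$. For the third stage, I use that the norm map $N_{F/F_0} : O_F^\times \to O_{F_0}^\times$, $x \mapsto x\bar x$, has finite cokernel: Dirichlet's unit theorem, applied factor by factor to the decomposition of $F$ into simple components and tracking how the involution acts on each (preserving a CM factor, preserving a totally real factor, or swapping two factors), yields $\rank O_F^\times = \rank O_{F_0}^\times$, so that $N(O_F^\times)$ has finite index in $O_{F_0}^\times$. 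Hence $\epsilon^k = w\bar w$ for some $k \ge 1$ and $w \in O_F^\times$, and setting $u := v^k/w$, $s := kms_0$ gives $u \in O_F[1/p]^\times$ with $\ord_\bfp u = s\lambda_\bfp$ for every $\bfp | p$ and
\[
u\bar u \;=\; \frac{v^k\bar v^k}{w\bar w} \;=\; \frac{p^{kms_0}\epsilon^k}{\epsilon^k} \;=\; p^s,
\]
which is the required identity.

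The main obstacle I anticipate is the product-ring aspect: $F$ is only the center of a semi-simple algebra, hence a product of number fields that the involution may permute amongst themselves. One has to verify that the three scenarios above exhaust the action of the involution on an individual factor, that $\lambda_\bfp + \lambda_{\bar\bfp} = 1$ automatically forces the consistent value $\lambda_\bfp = 1/2$ whenever $\bfp = \bar\bfp$ (so that the denominators can be cleared by a single $s_0$), and that the rank equality underlying the finite-cokernel statement for $N_{F/F_0}$ really holds uniformly across all factors. Apart from this bookkeeping, the argument invokes only the finiteness of the ideal class group of $O_F$ and of the unit index $[O_{F_0}^\times : N_{F/F_0}(O_F^\times)]$.
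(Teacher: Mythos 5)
Your argument is valid in outline but takes a longer route than the paper's, and one intermediate claim is false. The paper, after producing $u_0\in O_F[1/p]^\times$ with $\ord_\bfp u_0 = s\lambda_\bfp$ for all $\bfp\mid p$ (as in your first stage, via the finite index of the image of $\ord$ on $p$-units), simply replaces $u_0$ by $u := p^s u_0/\ol{u_0}$ and $s$ by $2s$: then $u\ol u = p^{2s}$ is immediate, and
\[
\ord_\bfp u \;=\; s + \ord_\bfp u_0 - \ord_{\ol\bfp} u_0 \;=\; s + s\lambda_\bfp - s(1-\lambda_\bfp) \;=\; 2s\lambda_\bfp,
\]
using the constraint $\lambda_\bfp + \lambda_{\ol\bfp} = 1$. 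This one-line manipulation accomplishes what your second and third stages do, with no appeal to the cokernel of the norm map.

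Your stages two and three are fixable, but the justification given for the finiteness of the cokernel of $N_{F/F_0}$ is incorrect as stated. The rank equality $\rank O_F^\times = \rank O_{F_0}^\times$ fails in the swap case: if $F$ has a block $K\times K$ with the involution exchanging the two copies (so $F_0$ meets that block in the diagonal, isomorphic to $K$), the block contributes $2\rank O_K^\times$ to $\rank O_F^\times$ but only $\rank O_K^\times$ to $\rank O_{F_0}^\times$, and these differ once $\rank O_K^\times>0$. Moreover, even if the ranks coincided, that alone would not force the image of $N$ to have full rank. The correct and simpler observation is that $N$ restricted to the subgroup $O_{F_0}^\times\subset O_F^\times$ is the squaring map, so $N(O_F^\times)\supset (O_{F_0}^\times)^2$, and $(O_{F_0}^\times)^2$ has finite index in $O_{F_0}^\times$ because $O_{F_0}^\times$ is a finitely generated abelian group. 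With that repair your argument goes through; it just does more work than necessary compared to the paper's trick.
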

\begin{proof}
  Consider the map 
  \[ \ord:O_F \left [\frac{1}{p} \right ]^\times\to \bigoplus_{\bfp\in
    \Sigma_p} (1/e_\bfp)\Z,
  \quad  
  u \mapsto (\ord_{\bfp}(u))_{\bfp\in \Sigma_p}, \] 
  where $e_\bfp$ is the ramification index of $\bfp$.
  By Dirichlet's unit theorem, the image has rank $|\Sigma_p|$ and 
  is of finite index. Therefore, there are a positive integer $s$ 
  and an element $u\in O_F[1/p]^\times$ such that
  $\ord_\bfp(u)=s\lambda_\bfp=:r_\bfp$ for all $\bfp\in \Sigma_p$. Let
  $q=p^s$ and $u':=q u/\bar u$, then one computes 
\[ \ord_\bfp u'=2r_\bfp\  \text{and}\  u' \ol u'=q^2. \]
Replacing $u$ by $u'$ and $q$ by $q^2$, one gets the 
desired result. \qed   
\end{proof}

The element $u$ in Lemma~\ref{32} actually lies in $O_F$ 
as $\ord_\bfp(u)\ge 0$ for all $\bfp|p$. 

\begin{lemma}\label{33}
  Fix $\{\lambda_\bfp\}_{\bfp|p}$ and $q=p^s$ as in
  Lemma~\ref{32}, and an positive integer $g$. Then 
  there is a positive integer $n$ such that for any basic
  $g$-dimensional polarized
  $O_B$-abelian variety $\ul A$ over a finite extension $\F_{q^m}$ of
  $\F_q$ with slopes $\{\lambda_\bfp\}_{\bfp|p}$, the $n$-th power of
  Frobenius morphism $\pi_A^n$ lies in $\iota(F)$.
\end{lemma}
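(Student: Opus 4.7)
The plan is to choose $u\in O_F$ as in Lemma~\ref{32} (so $u\bar u=q$ and $\ord_\bfp(u)=s\lambda_\bfp$ for all $\bfp|p$), set
\[ y:=\pi_A\cdot\iota(u)^{-m}\in\End^0(A)^\times, \]
and show that $y$ is a torsion element whose order is bounded by a constant $n=n(g)$ depending only on $g$; then $\pi_A^n=\iota(u^{mn})\in\iota(F)$ and we are done. Note that $\pi_A$ is central in $\End^0(A)$ by Tate's theorem, while $\iota(u)\in\iota(F)=\iota(Z(B))$ lies in the commutant of $\iota(B)$; in particular $\pi_A$ and $\iota(u)$ commute, so $y$ is well defined.

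The first step is the identity $yy^*=1$, where $*$ is the Rosati involution. Using the Weil relation $\pi_A\pi_A^*=q^m$, the compatibility $\iota(u)^*=\iota(\bar u)$ imposed by the $O_B$-polarization structure, and $u\bar u=q$, a direct computation gives $yy^*=\pi_A\pi_A^*\cdot\iota((u\bar u)^{-m})=q^m\cdot q^{-m}=1$.

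The crux is to show that every eigenvalue of $y$ acting on $H^1(A)$ is a root of unity. I would verify three places separately. At the Archimedean place, positivity of the Rosati involution realises $\End^0(A)\otimes\R$ as a product of matrix algebras over $\R$, $\C$, or $\bbH$ with their standard positive involutions, so $yy^*=1$ forces $y$ into a compact orthogonal/unitary/symplectic group and each of its eigenvalues has absolute value one in every complex embedding. At a prime $\ell\ne p$, both $\pi_A\pi_A^*=q^m$ and $u\bar u=q$ are $\ell$-adic units, so $\pi_A$ and $\iota(u)$ lie in $(\End(A)\otimes\Z_\ell)^\times$; hence $y$ preserves $T_\ell A$ as an automorphism and its eigenvalues are $\ell$-adic units. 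At $p$, since $y$ commutes with $\iota(F)$, it respects the $F_p$-decomposition $N(A)=\bigoplus_{\bfp|p}N_\bfp$; the basic hypothesis (Lemma~\ref{31}) says each $N_\bfp$ is isoclinic of slope $\lambda_\bfp$, so every $\pi_A$-eigenvalue on $N_\bfp$ has $p$-adic valuation $sm\lambda_\bfp$, exactly cancelling the $p$-adic valuation $\ord_\bfp(u^m)=ms\lambda_\bfp$ of the scalar action of $\iota(u^m)$. Thus the eigenvalues of $y$ are $p$-adic units as well, and Kronecker's theorem forces them to be roots of unity.

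Both $\pi_A$ (by Tate) and $\iota(u)$ (as an element of the commutative semisimple algebra $\iota(F)$) are semisimple and commute, so $y$ is semisimple and $y^N=1$ in $\End^0(A)$, where $N$ is the lcm of the orders of its eigenvalues. Each eigenvalue generates a subfield of $\bar\Q$ of degree at most $2g=\dim H^1(A)$, and the set $\{k:\varphi(k)\le 2g\}$ is finite; taking $n=n(g)$ to be its lcm yields the required uniform bound, independent of $\ul A$ and $m$. The most delicate step to carry out is the $p$-adic slope computation on the isotypic blocks $N_\bfp$, where the basic hypothesis enters essentially; the remaining ingredients (Tate's semisimplicity and centrality of Frobenius, the Weil relation $\pi_A\pi_A^*=q^m$, positivity of the Rosati involution, and Kronecker's theorem on algebraic integers of absolute value one) are all standard.
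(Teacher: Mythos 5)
Your proof is correct, and while it targets the same element $y=\iota(u)^{-m}\pi_A$ and the same structural facts ($yy^*=1$ under the Rosati involution), it unwinds the argument differently from the paper. The paper first passes, within the isogeny class, to a representative for which $\iota(O_F)\subset\End(A)$ and the \dieu module $M_\bfp$ satisfies $\mathcal F^s M_\bfp=p^{r_\bfp}M_\bfp$; then $y$ is an honest automorphism of $(A,\lambda)$, and Serre's theorem on the finiteness of $\Aut(A,\lambda)$ gives the finite order. The paper then uniformizes by bounding $\dim\End^0_B(A)\le 4g^2$ and appealing to a bound on torsion orders in a $\Q$-algebra of bounded dimension. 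You instead establish directly that every eigenvalue of $y$ on $H^1$ is an algebraic integer of absolute value one at all places (the $p$-adic balance being the place where basicness enters, exactly as in the paper's normalization $\mathcal F^s M_\bfp = p^{r_\bfp}M_\bfp$), and invoke Kronecker and the bound $\varphi(k)\le 2g$. This is more self-contained — you are in effect re-proving the needed case of Serre's theorem — and it avoids the reduction to a convenient lattice, at the cost of having to carry out the three-place eigenvalue analysis explicitly. Two small points worth tightening: the statement ``$\pi_A$ and $\iota(u)$ lie in $(\End(A)\otimes\Z_\ell)^\times$'' is not literally immediate because $O_F$ need not lie in $O_B$, so $\iota(u)$ is a priori only in $\End^0(A)$; it is cleaner to argue (as the paper's reduction does) after replacing $A$ by an isogenous variety with $\iota(O_F)\subset\End(A)$, or simply to observe that the eigenvalues of $\pi_A$ and of $\iota(u)$ on $V_\ell(A)$ are separately $\ell$-adic units (both have $p$-power norm) and that the two operators are simultaneously diagonalizable since they commute and are semisimple. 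And in the $p$-adic step one should say explicitly that $\pi_A$ and $\iota(u^m)$ are simultaneously diagonalizable on $N_\bfp\otimes\bar\Q_p$ so that eigenvalues of the quotient $y$ really are quotients of eigenvalues; this is again automatic from commutativity and semisimplicity. With these glosses the argument is complete.
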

\begin{proof}
  We first prove that the statement holds for one such object $\ul
  A$, i.e. there is an integer $n_A$ possibly depending on $A$ 
  such that $\pi_A^{n_A}\in \iota(F)$. Clearly the statement 
  depends only on the isogeny class of $\ul A$. 
  Let $M$ be the \dieu module of $\ul A$. Within the isogeny
  class, we 
  can choose $\ul A$ so that $\iota(O_F)\subset \End(A)$ 
  and $F^s M_\bfp=p^{r_\bfp} M_\bfp$ for all
  $\bfp\in \Sigma_p$, where $r_\bfp=s\lambda_\bfp$ and
  $M=\oplus_{\bfp|p} M_\bfp$ is the decomposition 
  with respect to (\ref{eq:N}). 
  Let $u$ be as in Lemma~\ref{32}, then
  $\iota(u)^{-m}\pi_A$ is an automorphism of $A$ that preserves the
  polarization as $\iota(u)^{-m}\pi_A(M_\bfp)=M_\bfp$ for all
  $\bfp\in \Sigma_p$. Therefore, a power of this automorphism 
  is the identity by a theorem of Serre. Thus, a power of $\pi_A$ 
  is contained in $\iota(F)$.

  Let $C:=\End^0_B(A)$. As $\dim C$ is bounded by $4g^2$, there is a
  fixed positive integer $n$ such that $\zeta^n=1$ for 
  any element $\zeta \in C$ of finite order. By the result we just
  proved that $\iota(u)^{-m}\pi_A\in C$ is of finite order,  
  we have $\pi_A^n\in \iota(F)$ for all such objects $\ul A$. \qed

% By the result we just proved and Tate's
%   theorem on homomorphisms of abelian varieties, $C\otimes
%   \Q_\ell=\End_B(V_\ell(A))$ for $\ell\neq p$ and $C\otimes
%   \Qp=\End_B(N(A))$. Particularly the $\Q$-algebra $C$ 
%   is independent of the choice of 
%   $\ul A$ and its center is $\iota(F)$. Therefore, there is a
%   fixed positive integer $n$ such that $\zeta^n=1$ for 
%   any root of unity $\zeta$ in $C$.
   
  % Note that we have proved that $\iota(u)^{-m} 
  % \pi_A$ is a root of unity in $C$. Therefore, 
  % $\pi_A^n\in \iota(F)$ for all such objects $\ul A$. \qed
\end{proof}

\subsection{Proof of Theorem~\ref{11}}
\label{sec:3.2}
  Let the notation be as in Theorem~\ref{11}.
  It suffices to show that $A$ has smCM, that is, any maximal 
  commutative semi-simple $\Q$-subalgebra of $\End^0(A)$ has degree  
  $2 \dim A$. Then by a theorem of Grothendieck (see a proof in
  \cite{oort:cm} or \cite{yu:cm}) there exists an
  abelian variety $A'$ over a finite field $\kappa$ and an
  isogeny $\varphi:A'\otimes_\kappa k \to A$ over $k$. Replacing
  $A'$ by one in its isogeny class if necessary, we may assume
  that $A'$ admits an action $\iota'$ of $O_B$ so that 
  the isogeny $\varphi$ is $O_B$-linear. Take the
  pull-back polarization $\lambda'$ on $A'$, which is clearly 
  defined over a finite field extension of $\kappa$. 

  Let $\{\lambda_\bfp\}_{\bfp|p}$ be the set of slopes for $\ul
  A$. Take $q=p^s$ and a positive integer $n$ as in Lemmas~\ref{32} 
  and \ref{33}. We can choose a 
  field $k_0$ finitely generated over $\F_q$ over which $\ul A$ is
  defined. The abelian variety
  $\ul A$ extends to a polarized $O_B$-abelian scheme $\ul {\bf A}$
  over $S=\Spec R$ for a finitely generated $\Fq$-subalgebra $R$ of $k_0$
  with fraction field $\Frac(R)=k_0$. We may assume further that $S$
  is smooth over $\Spec \Fq$. Let $s$ be a closed point
  of $S$ and $\eta$ the generic point. By Grothendieck's
  specialization theorem, the special fiber $\ul {\bf A}_s$ over $s$
  also has the same slopes $\{\lambda_\bfp\}_{\bfp|p}$, and hence is
  basic. 

  We identify the endomorphism rings $\End_{k_0}(A)=\End_R({\bf
  A})\subset \End({\bf A}_{\bar s})$, and write $\iota$ for the
  $O_B$-actions on these abelian varieties. Let
  \[ \rho_\ell:\pi_1(S,\bar \eta)\to \Aut (T_\ell(A_{\bar \eta}))\] be
  the associated $\ell$-adic representation. The action of $\Gal(\bar
  \eta/\eta)$ on $T_\ell(A_{\bar \eta})$ factors through $\rho_\ell$. 
  Again we identify the
  Tate modules $T_\ell({\bf A}_{\bar s})=T_\ell({\bf A}_{\wt S_{\bar
  s}})=T_\ell({A}_{\bar \eta})$, where $\wt S_{\bar s}$ is the (strict)
  Henselization of $S$ at $\bar s$. Put $V_\ell({A}_{\bar \eta}):=
  T_\ell({A}_{\bar \eta})\otimes \Q_\ell$.  

  Let $\pi_{A_s}$ be the Frobenius morphism on ${\bf A}_s$
  and ${\rm Frob}_s$ the geometric Frobenius element in $\pi_1(S,\bar
  \eta)$ corresponding to the closed point $s$. We have 
  \begin{itemize}
  \item [(i)] $\pi^n_{A_s}\in \iota(F)\subset \End (T_\ell({\bf
  A}_{\bar s}))$, by Lemma~\ref{33};
  \item [(ii)] $\rho_{\ell}({\rm Frob}_s^n)= \pi^n_{A_s}$ lies in the
  center $Z(\Q_\ell)$ of 
  $\GU_{B_\ell}(V_\ell(A_{\bar \eta}),\<\, ,\>)$, 
  by the identification of the Tate modules and (i);
  \item [(iii)] the Frobenius elements ${\rm Frob}_s$ for all closed
  points $s$ generate a dense subgroup of $\pi_1(S,\bar \eta)$. 
  \end{itemize} 

  Let $G_\ell:=\rho_\ell(\pi_1(S,\bar \eta))$ be the $\ell$-adic
  monodromy group. Let $m_n:G_\ell\to G_\ell$ be the map $x\mapsto
  x^n$.  % multiplication by $n$. 
  It is an open mapping and its image contains an open
  subgroup $U$ of $G_\ell$, which is of finite index. 
  Clearly $U$ lies in the center
  $Z(\Q_\ell)$ by (ii) and (iii). Replacing $k_0$ by a finite extension,
  we have $G_\ell\subset Z(\Q_\ell)$. Let $\Q_\ell[\pi]$ be the
  (commutative) subalgebra of $\End (V_\ell(A_{\bar \eta}))$ generated
  by 
  $G_\ell$. By Zarhin's theorem \cite{zarhin:end}, 
  $\Q_\ell[\pi]$ is semi-simple and
  commutative, and $\End_{\Q_\ell[\pi]} (V_\ell(A_{\bar \eta}))=\End
  (A)\otimes \Q_\ell$. Hence any maximal commutative semi-simple
  $\Q_\ell$-subalgebra of $\End(A)\otimes \Q_\ell$ is also a maximal
  one in $\End(V_\ell(A_{\bar \eta}))$. 
  This shows that any maximal commutative
  semi-simple 
  subalgebra of $\End^0(A)$ has degree $2g$, and hence completes the
  proof. \qed

%\subsection{Proof of Proposition}
%\label{sec:33}

% Suppose that the finite field $k$ has $q=p^s$ elements. 
% Let $A\sim \Pi_{i=1}^t A_i^{n_i}$ be the decomposition into
% components up to isogeny, where each abelian variety $A_i$ 
% is simple and $A_i\not\sim
% A_j$ for any $i\neq j$. 
% Let $\pi_i$ the relative Frobenius endomorphism
% of $A_i$ and put $F_i:=\Q(\pi_i)$. Then we have $F=\prod_i^t F_i$. Let
% $\Sigma_{p,i}$ be the primes $\bfp$ of $F_i$ over $p$. Thus,
% $\Sigma_p$ is the disjoint union of its subsets $\Sigma_{p,i}$ for
% $i=1,\dots, t$. Let $N$ (resp. $N_i$) 
% be the isocrystal with additional structures associated to the abelian
% $F$-variety $\ul A=(A,\iota)$ (resp. $\ul A_i=(A_i,\iota_i)$). 
% Clearly if $\bfp\in
% \Sigma_{p,i}$ then $N_\bfp=N_{i,\bfp}^{n_i}$. In particular, $N_\bfp$
% is isotypic for all $\bfp\in \Sigma_p$ if and only if $N_{i,\bfp}$ is
% isotypic for all $i$ and all $\bfp\in \Sigma_{p,i}$. It follows from
% Lemma~\ref{31} that $\ul A$ is basic if and only if $\ul A_i$ is
%  basic 
% for all $i=1,\dots, t$. Therefore, it suffices to show the statement
% when $A$ is simple. In this case as $F^s=\pi$ and $\pi\in F_\bfp$,
% the component $N_\bfp$ has slope $\ord_\bfp(\pi)/s$. \qed

\subsection{Consequences}
\label{sec:34}

In \cite{yu:smf} we defined a class of polarized $B$-abelian 
varieties, called of arithmetic type. For these abelian
varieties the ``simple mass formula'' in 
\cite[Theorem 2.2]{yu:smf} remain valid for \ac ground fields, not
just for finitely generated fields over a prime field. 
We related these $B$-abelian varieties with
basic $B$-abelian varieties in the case where the ground field $k$ is
$\Fpbar$; see \cite[Theorem 4.5]{yu:smf}. Using Theorem~\ref{11},  
we extend this result to
an arbitrary \ac field k of \ch $p>0$,

Recall that a polarized $B$-abelian variety $(A,\lambda, \iota)$
over an \ac field $k$ of \ch $p>0$ is said to 
be {\it of arithmetic type} 
if there is a model $(A_0,\lambda_0,\iota_0)$ of $(A,\lambda,\iota)$ 
over a subfield $k_0$ finitely generated over $\Fp$ such that the 
associated Galois
representation $\rho_\ell: \calG_{k_0}\to \GU_{B}(V_\ell(A_0),
e_{\lambda,\ell})$ (Section~\ref{sec:21}) is central 
for some prime $\ell\neq p$ 
(or equivalently, for all primes $\ell\neq p$, 
see \cite[Proposition 3.10]{yu:smf}). 
%in the center of the target group  $\GU_{B}(V_\ell(A_0), 
%e_{\lambda,\ell})$. 
It is shown in \cite[Section 3]{yu:smf} that 
this is again a geometric notion which depends only on the
underlying $B$-abelian variety $(A,\iota)$ 
and not on the carried polarization structure $\lambda$.  
\begin{thm}\label{35}
  A $B$-abelian variety $(A,\iota)$ over an \ac field $k$ of \ch
  $p>0$ is of arithmetic type if and only if it is basic. 
\end{thm}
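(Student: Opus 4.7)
The plan is to combine Theorem~\ref{11} with the known case $k=\Fpbar$ of \cite[Theorem~4.5]{yu:smf} to cover arbitrary algebraically closed $k$. I will use throughout that both properties are geometric (Definition~\ref{def:basic}(3) for basic; \cite[Section~3]{yu:smf} for arithmetic type) and invariant under $B$-linear isogeny, so that $(A,\iota)$ may be freely modified up to isogeny and base change.

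For \emph{basic $\Rightarrow$ of arithmetic type}, I would fix any $B$-linear polarization $\lambda$ on $(A,\iota)$ (it exists by \cite[Lemma~9.2]{kottwitz:jams92}) and apply Theorem~\ref{11} to $(A,\lambda,\iota)$ to obtain $(A',\lambda',\iota')$ over a finite field $\kappa\subset k$ together with an $O_B$-linear polarization-preserving isogeny $A'\otimes_\kappa k\to A$. Since $(A'\otimes\Fpbar,\iota')$ is still basic, Lemma~\ref{33} lets me enlarge $\kappa$ so that $\pi_{A'}\in\iota'(F)$, which by Lemma~\ref{31}(1) lies in $Z(\Q_\ell)$. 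Hence the $\ell$-adic Galois representation of $A'/\kappa$ is central. Descending the isogeny to a finitely generated subfield $k_0\supset\kappa$ of $k$ then produces a model of $(A,\iota)$ over $k_0$ whose Galois representation factors through that of $A'/\kappa$ via $\Gal(\bar k_0/k_0)\to\Gal(\bar\kappa/\kappa)$, and is therefore central. Thus $(A,\iota)$ is of arithmetic type.

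For the converse, starting from an arithmetic-type model $(A_0,\iota_0)/k_0$, the central image of the Galois representation sits inside $\iota(F)\otimes\Q_\ell$ by Lemma~\ref{31}(1), and its centralizer inside $\End(V_\ell(A_0))$ is $\End_{F_\ell}(V_\ell(A_0))$. Zarhin's theorem \cite{zarhin:end} then gives $\End^0(A_0)\otimes\Q_\ell=\End_{F_\ell}(V_\ell(A_0))$, a matrix algebra over $F_\ell$ whose maximal commutative semi-simple subalgebras have $\Q_\ell$-dimension $2g$, so $A_0$ has smCM. By Grothendieck's theorem (\cite{oort:cm}, \cite{yu:cm}), after a finite base-field extension $A_0$ becomes isogenous to an abelian variety $A_1$ defined over a finite field $\kappa$; as in the proof of Theorem~\ref{11}, I can adjust the isogeny so that the $B$-action descends to $\iota_1\colon O_B\to\End(A_1)$. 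Transporting the central Galois representation through $\Gal(\bar k_0/k_0)\to\Gal(\bar\kappa/\kappa)$ then places a power $\pi_{A_1}^n$ inside $Z(\Q_\ell)\cap\End^0(A_1)$, and the $\Q$-versus-$\Q_\ell$ intersection argument used in Section~\ref{sec:3.2} forces $\pi_{A_1}^n\in\iota_1(F)$. Lemma~\ref{31}(2) makes $(A_1,\iota_1)$ basic, whence so is $(A,\iota)$ by isogeny-invariance.

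The principal difficulty will be the converse direction, where two steps need genuine care: descending the $B$-action along Grothendieck's theorem to obtain a genuinely $B$-linear model $(A_1,\iota_1)/\kappa$, and promoting the $\ell$-adic statement $\pi_{A_1}^n\in Z(\Q_\ell)$ to the global statement $\pi_{A_1}^n\in\iota_1(F)$ inside $\End^0(A_1)$. Both are close analogues of steps already executed in Section~\ref{sec:3.2}, so I expect the argument to go through, but neither is automatic.
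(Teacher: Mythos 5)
Your argument is essentially correct, but it takes a noticeably different and more self-contained route than the paper. The paper's own proof is a two-line reduction: Theorem~\ref{11} replaces $(A,\iota)$, up to $B$-linear isogeny, by the base change of a $B$-abelian variety over $\Fpbar$, and since both ``basic'' and ``of arithmetic type'' are geometric, isogeny-invariant notions, the whole statement is reduced to the case $k=\Fpbar$, which is then simply quoted as \cite[Theorem 4.5]{yu:smf}. You instead re-prove both implications: for ``basic $\Rightarrow$ arithmetic type'' you redo the finite-field case by hand (Theorem~\ref{11}, then Lemmas~\ref{32} and~\ref{33} together with Lemma~\ref{31}(1) to see that a power of Frobenius is central, hence a suitable model has central Galois representation), and for the converse you rerun the Zarhin--Grothendieck argument of Section~\ref{sec:3.2} to get smCM, descend to a finite field, and verify basicness via Lemma~\ref{31}(2) and Remark~\ref{rem:32}. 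What your route buys is independence from \cite[Theorem 4.5]{yu:smf}; in particular your converse direction makes explicit the descent step (arithmetic type $\Rightarrow$ smCM $\Rightarrow$ isogenous to an abelian variety over a finite field) which the paper's reduction leaves inside the citation, since Theorem~\ref{11} cannot be invoked before basicness is known. What it costs is length and a few points that would need tightening: the centralizer of the Galois image is $\End_{\Q_\ell[\mathrm{im}\,\rho_\ell]}(V_\ell(A_0))$, which may be strictly larger than $\End_{F_\ell}(V_\ell(A_0))$ (this only helps for smCM, but the equality you assert is not justified as stated); Grothendieck's theorem gives an isogeny over $\bar k_0$, defined a priori only over a finitely generated, not necessarily finite, extension of $k_0$; and the passage from the central representation of $\Gal(\bar k_0/k_0)$ to a power of $\pi_{A_1}$ should be carried out either by restricting to the extension over which the isogeny is defined and using surjectivity onto an open subgroup of $\Gal(\bar\kappa/\kappa)$, or by the spreading-out/specialization argument of Section~\ref{sec:3.2}, rather than by the loose ``transport'' you describe. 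None of these is a genuine gap, but they are exactly the places where care is required.
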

\begin{proof}
  By Theorem~\ref{11}, there is a $B$-abelian variety 
  $(A_0,\iota_0)$ over $\Fpbar$ and a $B$-linear isogeny $\varphi:
  (A_0,\iota_0)\otimes_{\Fpbar} k \to (A,\iota)$. As a result we can
  reduce the statement to the case where $k=\Fpbar$ and this is Theorem
  4.5 of \cite{yu:smf}. \qed
\end{proof}

\begin{prop}[cf.~{\cite[Corollary 6.29]{rapoport-zink}}\,]\label{36}
  Let $K$ be a finite-dimensional 
  % commutative 
  semi-simple $\Q$-algebra that admits
  a positive involution. Let $(A,\iota)$ and $(A',\iota')$ be two
  basic $K$-abelian varieties over an \ac field $k$ of \ch $p>0$. Then 
  we have
  \begin{equation}
    \label{eq:V}
   \Hom_{K}(A,A')\otimes_{\Z} \Q_\ell \simeq \Hom_{K}
  (V_\ell(A),V_\ell(A')) \quad \forall\, \ell \neq p, 
  \end{equation}
and 
\begin{equation}
  \label{eq:NF}
\Hom_{K}(A,A')\otimes_{\Z} \Q_p \simeq \Hom_{K}
  ((N,\calF),(N',\calF)),
\end{equation}
where $N$ and $N'$ are 
the isocrystals associated to $(A,\iota)$ and $(A',\iota')$, respectively. 
\end{prop}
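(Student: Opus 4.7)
My plan is to reduce to the case where both $A$ and $A'$ are defined over a common finite field, where Tate's theorems apply directly. First, I would use Remark~\ref{rem:32} together with Kottwitz~\cite[Lemma~9.2]{kottwitz:jams92} to equip $(A,\iota)$ and $(A',\iota')$ with $K$-linear polarizations, so that Theorem~\ref{11} applies to each. This produces $K$-abelian varieties $(A_0,\iota_0)$ and $(A'_0,\iota'_0)$ defined over a common finite field $\kappa \subset k$ together with $K$-linear isogenies $A_0 \otimes_\kappa k \sim A$ and $A'_0 \otimes_\kappa k \sim A'$. Since both sides of (\ref{eq:V}) and (\ref{eq:NF}) are invariant under $K$-linear isogeny, I may replace $(A,\iota)$ and $(A',\iota')$ by these base changes. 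Because every $K$-linear homomorphism between abelian varieties over $k$ is defined over a finite extension of $\kappa$ and $\Hom_K(A_0 \otimes k, A'_0 \otimes k)$ is finitely generated, after enlarging $\kappa$ to a finite extension $\kappa'$ I may assume $\Hom_K(A_0, A'_0) = \Hom_K(A_0 \otimes k, A'_0 \otimes k)$. Finally, applying Lemma~\ref{33} and enlarging $\kappa'$ once more if needed, I arrange that the Frobenius endomorphisms of $A_0$ and $A'_0$ relative to $\kappa'$ both lie in $\iota(F) \subset K$.

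For the $\ell$-adic statement (\ref{eq:V}), I would then apply Tate's theorem for abelian varieties over finite fields,
\[
 \Hom(A_0, A'_0) \otimes_\Z \Q_\ell \isoto \Hom_{\Gal(\bar\kappa'/\kappa')}(V_\ell(A_0), V_\ell(A'_0)),
\]
extract $K$-equivariant parts on both sides, and identify $V_\ell(A) = V_\ell(A_0)$. Since $\Gal(\bar\kappa'/\kappa')$ is topologically generated by Frobenius, which by our arrangement acts on both Tate modules through $\iota(F) \subset K$, any $K$-linear map is automatically Galois-equivariant, and (\ref{eq:V}) follows. For the $p$-adic statement (\ref{eq:NF}), I would invoke the analogous Tate conjecture for $p$-divisible groups over finite fields (due to Tate; see Waterhouse--Milne),
\[
 \Hom(A_0, A'_0) \otimes_\Z \Q_p \isoto \Hom_\calF(N(A_0), N(A'_0)),
\]
extract $K$-equivariant parts, and observe that the flat base change from $B(\kappa')$ to $B(k)$ preserves the groups of $(K,\calF)$-equivariant morphisms.

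The hard part, I expect, is the $\ell$-adic step: justifying that Galois-equivariance is \emph{automatic} from $K$-equivariance on basic objects. This rests essentially on Lemma~\ref{33}, which ensures that a power of Frobenius actually lies in $\iota(F)$; without this ingredient one would be stuck with a residual Galois constraint on the right-hand side of (\ref{eq:V}) that is not accounted for by the endomorphism structure. All remaining steps---choosing the polarization, descending both the objects and their morphisms to a common finite field, and quoting Tate's theorem in its standard and $p$-divisible forms---are relatively routine.
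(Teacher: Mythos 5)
Your plan departs from the paper's proof in a substantive way: the paper replaces $K$ by its center (Remark~\ref{rem:32}), uses Theorem~\ref{11} only to descend both objects to $\Fpbar$, identifies the Hom groups, Tate modules and isocrystals under that descent, and then quotes \cite[Corollary 6.29]{rapoport-zink} for the case $k=\Fpbar$. You instead try to reprove that geometric input from Tate's theorems after descending to a finite field. The routine parts of your plan (polarizing via Kottwitz, spreading out the objects and their morphisms over a finite extension $\kappa'$, invoking Tate's $\ell$-adic and $p$-divisible theorems) are fine, but the step you yourself flag as the crux is where the argument breaks.

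The gap is this: Lemma~\ref{33} puts a power of the Frobenius of $A_0$ into $\iota_0(F)$ and a power of the Frobenius of $A'_0$ into $\iota'_0(F)$, i.e.\ $\pi_{A_0}^n=\iota_0(f_0)$ and $\pi_{A'_0}^n=\iota'_0(f_1)$ for some $f_0,f_1\in F$, but nothing forces $f_0=f_1$, and these are two different embeddings into two different endomorphism algebras. For a $K$-linear $u:V_\ell(A_0)\to V_\ell(A'_0)$, Galois-equivariance amounts to $\iota'_0(f_1)\circ u=u\circ\iota_0(f_0)=\iota'_0(f_0)\circ u$, which is automatic only if $f_0=f_1$ (on every component of $F$ carrying both objects). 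A concrete failure: take $F$ imaginary quadratic with $p$ split, $A_0$ an ordinary elliptic curve with CM by $F$, and $A'_0=A_0$ with the conjugate $F$-action; both are basic by Lemma~\ref{31}, the Frobenii correspond to $f_0$ and $\ol f_0\neq f_0$ (and stay distinct under all base extensions), and $K$-linear maps of Tate modules are not Galois-equivariant. So your $\ell$-adic argument only delivers an isomorphism of $\Hom_{K}(A,A')\otimes\Q_\ell$ onto the subspace of \emph{Galois-equivariant} elements of $\Hom_K(V_\ell(A_0),V_\ell(A'_0))$, not (\ref{eq:V}); similarly, the claim that base change from $B(\kappa')$ to $B(k)$ ``preserves the groups of $(K,\calF)$-equivariant morphisms'' is precisely what needs proof, since Frobenius-equivariant Homs of isocrystals do grow under such base change in general. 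To close the gap you would have to match the two Frobenii after a further finite extension --- for instance, when the corresponding slope components agree, $f_0/f_1$ is a unit at every finite place and has absolute value one at every archimedean place, hence is a root of unity, so a further extension makes them equal --- and then treat the components where the slopes disagree separately. That matching argument is exactly the content of \cite[6.28--6.29]{rapoport-zink} which the paper, after reducing to $k=\Fpbar$ and to commutative $K$, simply cites.
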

\begin{proof}
  Let $E$ be the center of $K$. If (\ref{eq:V}) and (\ref{eq:NF}) hold
  true where $K$ is replaced by $E$, then 
  (\ref{eq:V}) and (\ref{eq:NF}) hold true. Note that $A$ is a basic
  $K$-abelian variety if and only if it is a basic $E$-abelian
  variety (Remark~\ref{rem:32}) . 
  Replacing $K$ by its center, we may assume that $K$ is
  commutative.  

  By Theorem~\ref{11}, there are $K$-abelian varieties $(A_0,\iota_0)$
  and $(A_0',\iota_0')$ over $\Fpbar$ such that
  $(A_0,\iota_0)\otimes_{\Fpbar} k \sim (A,\iota)$ and 
  $(A'_0,\iota'_0)\otimes_{\Fpbar} k \sim (A',\iota')$. We have
  a natural isomorphism 
\[ \Hom_{K}(A_0,A'_0)\otimes_{\Z} \Q\simeq 
  \Hom_{K}(A,A')\otimes_{\Z} \Q, \]
and natural identifications $V_\ell(A_0)=V_\ell(A)$ 
and $V_\ell(A'_0)=V_\ell(A')$ for $\ell \neq p$. For $\ell=p$, we have
also 
the identification $\Hom_K((N_0,
\calF),(N_0',\calF))=\Hom_K((N,\calF), (N',\calF))$, where
  $N_0$ and $N'_0$ are the isocrystals 
  associated to $(A_0,\iota_0)$ and $(A'_0,\iota'_0)$, respectively.
  Therefore, we are reduced to prove the
  statement where $k=\Fpbar$, which is done by Rapoport-Zink 
  (see \cite[Corollary 6.29, p.~293]{rapoport-zink}). \qed  
\end{proof}

\begin{cor}\label{37}
  Let $(A,\iota)$ and $(A',\iota')$ be two basic $B$-abelian varieties
  over an \ac field $k$ of \ch $p$, with slopes
  $\{\lambda_\bfp\}_{\bfp|p}$ and $\{\lambda'_\bfp\}_{\bfp|p}$,
respectively. Then $(A,\iota)$ and $(A',\iota')$ are  
isogenous if and only if  $\lambda_\bfp=\lambda'_\bfp$ and $\rank
N_\bfp=\rank N'_{\bfp}$ for all $\bfp|p$.
\end{cor}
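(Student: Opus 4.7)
The forward direction is immediate: any $B$-linear isogeny induces an isomorphism of rational Dieudonné modules that respects the $F_p$-decomposition and the slope filtration, and therefore preserves both the slopes $\lambda_\bfp$ and the $F_\bfp$-ranks $\rank N_\bfp$.

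For the converse, assume $\lambda_\bfp=\lambda'_\bfp$ and $\rank N_\bfp=\rank N'_\bfp$ for all $\bfp\mid p$. By Remark~\ref{rem:32} one may equip $(A,\iota)$ and $(A',\iota')$ with $B$-linear polarizations, so Theorem~\ref{11} supplies basic $B$-abelian varieties $(A_0,\iota_0)$ and $(A_0',\iota_0')$ over $\Fpbar$ together with $B$-linear isogenies $A_0\otimes_{\Fpbar} k\to A$ and $A_0'\otimes_{\Fpbar} k\to A'$. Since a $B$-linear isogeny between the $\Fpbar$-models base-changes to the desired one over $k$, I may assume $k=\Fpbar$.

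Over $\Fpbar$, the next step is to show the hypothesis forces $(N,\calF)\simeq(N',\calF)$ as $(B\otimes_{\Qp} L)$-isocrystals. By Lemma~\ref{31}(2) each component $N_\bfp$ (resp.\ $N'_\bfp$) is isotypic of slope $\lambda_\bfp$, and the Rapoport-Zink classification of basic isocrystals with $B$-structure (see 6.26--6.29 of \cite{rapoport-zink}) determines such a $(B_\bfp\otimes L)$-isocrystal up to isomorphism by its slope together with its $F_\bfp$-rank. The hypotheses thus produce the required isomorphism component by component.

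Finally, Proposition~\ref{36} identifies $\Hom_B(A,A')\otimes_\Z\Q_p$ with $\Hom_B((N,\calF),(N',\calF))$, so the isocrystal isomorphism yields a nonzero element of $\Hom_B(A,A')\otimes_\Z\Q$ by density of $\Q$ in $\Q_p$ together with openness of the isomorphism locus in the isocrystal Hom. Clearing denominators gives some $\phi\in\Hom_B(A,A')$ whose restriction to $p$-divisible groups is an isogeny; since $\dim A=\dim A'$, any positive-dimensional identity component of $\ker\phi$ would contribute a nontrivial $p$-divisible subgroup to $\ker\phi[p^\infty]$, contradicting finiteness. Hence $\phi$ is a $B$-linear isogeny. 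The main obstacle is the third paragraph: pinning down a basic $(B\otimes L)$-isocrystal up to isomorphism from slopes and $F_\bfp$-ranks alone, which combines Dieudonné-Manin with the local structure theory of central simple algebras.
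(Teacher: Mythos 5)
Your proof is correct and takes essentially the same route as the paper, whose entire argument for Corollary~\ref{37} is ``this follows from Proposition~\ref{36}'': you simply make explicit the implicit steps, namely that a basic isocrystal with $B$-structure is determined up to $B$-linear isomorphism by the slopes $\lambda_\bfp$ and the ranks of the $N_\bfp$ (since $B_\bfp\otimes_{\Q_p}D_{\lambda_\bfp}$ is central simple over $F_\bfp$), and that the isomorphism is transported through $\Hom_B(A,A')\otimes_{\Z}\Q_p\simeq \Hom_B((N,\calF),(N',\calF))$ and approximated by an element of $\Hom_B(A,A')\otimes\Q$ lying in the open locus of isomorphisms, which is then an isogeny. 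The only wording quibble is that this approximation must be chosen in that isomorphism locus (as your openness-plus-density argument in fact provides), not merely to be a nonzero element.
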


\begin{proof}
  This follows from Proposition~\ref{36}. \qed
\end{proof}
\section{A correspondence}
\label{sec:04}

\subsection{}
\label{sec:41}

Let $\Fq$ be the finite field of $q=p^s$ elements. 
Let $\calA_{\Fq}$ denote the category of abelian varieties over $\Fq$
up to isogeny. Let $\calB$ be the category defined as follows, which
we call the category of basic abelian varieties with
endomorphisms over $\Fpbar$ up to isogeny. The
objects of $\calB$ consist of all triples $(F, A,\iota)$, where 
\begin{itemize}
\item $F$ is
a finite-dimensional commutative semi-simple $\Q$-algebra that admits a
positive involution, and
\item $(A,\iota)$ is a basic $F$-abelian variety over $\Fpbar$.
\end{itemize}
 
For any two
objects $\ul A_1=(F_1, A_1,\iota_1)$ and $\ul A_2=(F_2,A_2,\iota_2)$
in $\calB$, a morphism in $\Hom_{\calB}(\ul A_1, \ul A_2)$ 
is a pair $(\varphi,\wt \varphi)$, where 
\begin{itemize}
\item $\wt \varphi: F_1 \to F_2$ is
a $\Q$-linear algebra homomorphism in a broader sense that the image $\wt
\varphi(1_{F_1})$ of the identity $1_{F_1}$ may not be the identity
$1_{F_2}$, and 
% (that is, it is not required that $\wt
% \varphi$ maps the identity $1_{F_1}$ of $F_1$ to $1_{F_2}$), and
\item $\varphi$ is an element in $\Hom(A_1,A_2)\otimes \Q$ 
which is $(F_1,F_2)$-equivariant in the sense that $\varphi \circ
\iota_1(a)= 
\iota_2(\wt \varphi (a))\circ \varphi$ for all $a\in F_1$. 
\end{itemize}

Note that if the map $\wt \varphi:F_1\to
F_2$ as above is surjective, 
then $\wt \varphi(1_{F_1})=1_{F_2}$ (as $\wt \varphi(1_{F_1})y=y$ for
all $y\in F_2$), i.e. it is also a
ring homomorphism. A reason we need to allow more general maps 
$\wt \varphi$ is as follows. Let $(A_i,\iota_i)$ be an $F_i$-abelian
variety for $i=1,2$, and $\iota_1\times \iota_2:F_1\times F_2\to
\End(A_1\times A_2)$ the product map. Then the map
$\varphi=\id_{A_1}\times 0:A_1\to A_1\times A_2$ is an
$(F_1,F_1\times F_2)$-equivariant with respect to the  map $\wt
\varphi=\id_{F_1}\times 0:F_1\to F_1\times F_2$. The latter map is not
a ring homomorphism.  

Clearly two objects $\ul A_1$ and $\ul A_2$ in $\calB$
are isomorphic if and only if there is a
$\Q$-algebra isomorphism $\wt \varphi: F_1\simeq F_2$, and an
$(F_1,F_2)$-equivariant quasi-isogeny $\varphi:A_1\to A_2$ over
$\Fpbar$. 

The category $\calB$ is not yet good enough  
in comparison with the category 
of abelian varieties with fixed endomorphism structures; there are
simply too many morphisms $\wt \varphi$ among the fields $F$. 
For example, when $F_1=F_2=F$, the usual notion of morphisms
between two $F$-abelian varieties would require $\wt \varphi$
to be the identity and not an arbitrary automorphism as 
in the category $\calB$. 

We introduce another category
$\calB^{\rm rig}$,
which we call the category of 
basic abelian varieties with rigidified endomorphisms over
$\Fpbar$ up to isogeny. The objects of $\calB^{\rm rig}$ consist of
all tuples
$(F,x, A,\iota)$ over $\Fpbar$, where $(F,A,\iota)$ is an object in
$\calB$ and $x\in F$ is an element generating $F$ over
$\Q$. Suppose $(F,x,A,\iota)$ is an object in $\calB^{rig}$, 
let $\Q[t]\to F$ be the
natural surjective map % from the polynomial ring $\Q[t]$ onto $F$
sending $t$ to $x$, and $f:\Q[t]\to \End^0(A)$ be the morphism
obtained by composing with the map $\iota$. 
Given two objects $\ul A_i=(F_i,x_i,
A_i,\iota_i)$ in $\calB^{\rm rig}$ ($i=1,2$), a morphism $\varphi:
\ul A_1\to \ul A_2$ in $\calB^{\rm rig}$ is an element $\varphi\in
\Hom(A_1, A_2)\otimes \Q$ such that $\varphi\circ f_1(a)=f_2(a)\circ
\varphi$ for all $a\in \Q[t]$, where $f_i:\Q[t]\to \End^0(A_i)$ are the
maps associated as above.     
In the case when $F_1=F_2=F$, we have 
\[ \Hom_F((A_1,\iota_1),(A_2,\iota_2))\otimes_{\Z} \Q =\Hom_{\calB^{\rm
    rig}}((F,x,A_1,\iota_1), (F,x, A_2,\iota_2)) \]   
for any element $x$ generates $F$ over $\Q$, which recovers the usual
notion of morphisms of $F$-abelian varieties (though we may not really
want the additional structure $x$). 

We shall embed $\calA_{\Fq}$ as a full subcategory of $\calB^{\rm
  rig}$. As the first step, we prove the following result.

\begin{prop}\label{41}
  Let $A$ be an abelian variety over $\Fq$ and $\pi_A$ its 
  Frobenius endomorphism. Put $F:=\Q(\pi_A)$ and $\iota:F\to \End^0(A)$
  for the inclusion. Then the $F$-abelian variety $(A,\iota)$ is basic. 
\end{prop}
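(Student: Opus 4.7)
The plan is to verify the criterion of Lemma~\ref{31}(2), as reformulated in Remark~\ref{rem:32}: the $F$-abelian variety $(A,\iota)$ is basic precisely when each summand $N_\bfp$ in the decomposition $N(A)=\oplus_{\bfp\mid p}N_\bfp$ of the rational \dieu module according to the $F_p=F\otimes_\Q\Q_p$-action is isotypic. Any polarization $\lambda$ on $A/\Fq$ is automatically $F$-linear, since $\pi_A^\dagger=q/\pi_A$ already lies in $F$ and so the Rosati involution preserves $F$; the criterion therefore applies. Before invoking it, I would check that $F=\Q(\pi_A)$ is a finite-dimensional commutative semi-simple $\Q$-algebra admitting a positive involution: semi-simplicity follows from Tate's theorem that $\pi_A$ acts semi-simply on $V_\ell(A)$ (so its minimal polynomial is separable), and the positive involution is the restriction of any Rosati involution to $F$.

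The core is a slope computation. Fix $\bfp\mid p$. Since the $F_p$-action commutes with the Frobenius $\calF$ on the isocrystal, $N_\bfp$ is a sub-isocrystal, which further decomposes into its isoclinic pieces $N_\bfp=\oplus_\alpha N_\bfp^\alpha$. On the slope-$\alpha$ piece, the $B(\Fpbar)$-linear operator $\pi_A$ has all eigenvalues of normalized $p$-adic valuation $s\alpha$ (with $q=p^s$); this is standard from the relation $\pi_A=F^s$ on the \dieu module combined with the Dieudonn\'e--Manin classification of isoclinic isocrystals. On the other hand, $\pi_A$ acts on the whole of $N_\bfp$ as multiplication by its nonzero image $\pi_{A,\bfp}\in F_\bfp^\times$ (nonzero because $\pi_A$ is an isogeny), so the eigenvalues of $\pi_A$ on $N_\bfp$ in $\Qpbar$ lie among the $\Q_p$-conjugates of $\pi_{A,\bfp}$, and these share a common normalized $p$-adic valuation $v_\bfp$ because Galois preserves $p$-adic absolute values. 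Comparing yields $s\alpha=v_\bfp$ for every slope $\alpha$ appearing in $N_\bfp$, so only the single slope $\alpha=v_\bfp/s$ occurs, and $N_\bfp$ is isotypic.

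I expect the most delicate point to be formalizing the eigenvalue analysis of scalar multiplication by $\pi_{A,\bfp}$ on $N_\bfp$ viewed as a module over $F_\bfp\otimes_{\Q_p}B(\Fpbar)$: one must confirm that the characteristic polynomial of this $B(\Fpbar)$-linear operator has its roots among the $\Q_p$-conjugates of $\pi_{A,\bfp}$, all of which share the same $p$-adic valuation. Once each $N_\bfp$ has been shown isotypic, Lemma~\ref{31}(2) together with Remark~\ref{rem:32} yields that $(A,\iota)$ is basic.
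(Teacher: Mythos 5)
Your proposal is correct and takes essentially the same route as the paper: both verify the criterion of Lemma~\ref{31}(2) (via Remark~\ref{rem:32}) that each component $N_\bfp$ is isotypic, by combining the relation $\pi_A=\calF^s$ with the fact that $\pi_A$ acts on $N_\bfp$ through an element of $F_\bfp^\times$ all of whose $\Q_p$-conjugates have the same valuation, forcing a single slope $\ord_\bfp(\pi_A)/s$. The only difference is cosmetic: the paper first reduces to simple $A$ by decomposing up to isogeny, whereas you argue directly with the image $\pi_{A,\bfp}\in F_\bfp^\times$, which is what the paper's simple-case computation amounts to anyway.
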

\begin{proof}
Suppose that the finite field $k$ has $q=p^s$ elements. 
Let $A\sim \Pi_{i=1}^t A_i^{n_i}$ be the decomposition into
components up to isogeny, where each abelian variety $A_i$ 
is simple and $A_i\not\sim
A_j$ for any $i\neq j$. 
Let $\pi_i$ be the Frobenius endomorphism
of $A_i$ and put $F_i:=\Q(\pi_i)$. Then we have $F=\prod_i^t F_i$. Let
$\Sigma_{p,i}$ be the set of the primes $\bfp$ of $F_i$ over $p$. Thus,
$\Sigma_p$ is the disjoint union of $\Sigma_{p,i}$ for
$i=1,\dots, t$. Let $N$ (resp. $N_i$) 
be the isocrystal associated to the 
$F$-abelian
variety $\ul A=(A,\iota)$ (resp. $\ul A_i=(A_i,\iota_i)$). 
Clearly if $\bfp\in
\Sigma_{p,i}$ then $N_\bfp=N_{i,\bfp}^{n_i}$. In particular, $N_\bfp$
is isotypic for all $\bfp\in \Sigma_p$ if and only if $N_{i,\bfp}$ is
isotypic for all $i$ and all $\bfp\in \Sigma_{p,i}$. It follows from
Lemma~\ref{31} that $\ul A$ is basic if and only if $\ul A_i$ is basic
for all $i=1,\dots, t$. Therefore, it suffices to prove the statement
when $A$ is simple. In this case, as $F^s=\pi$ and $\pi\in F_\bfp$,
the component $N_\bfp$ has slope $\ord_\bfp(\pi)/s$. \qed
\end{proof}   

By Lemma~\ref{31}, if $K$ is any commutative semi-simple
$\Q$-subalgebra of the endomorphism algebra $\End^0(A)$ which
is stable under a Rosati involution and contains $F$, then $(A,i)$
with $i:K \subset \End^0(A)$, is also a basic $K$-abelian variety. 
Our way of making $A$ into a basic abelian
variety with endomorphism structures as in Proposition~\ref{41} is, 
after a suitable base change, the most
``economical'' one. Namely, one uses the least endomorphisms.  

% contains a
% The following result shows that our way to make an abelian variety
% over a finite field as an basic abelian $F$-variety is ``optimal''.

\begin{prop}\label{4.15}
  Let $A$ be an abelian variety over $\Fq$ such that $\End(A)=\End(\ol
  A)$, where $\ol A=A\otimes \Fpbar$. 
  Suppose that $K$ is a commutative semi-simple $\Q$-algebra admitting
  a positive involution, and $(A,
  \iota)$ is a basic $K$-abelian variety. 
%  for a 
%  commutative 
%  semi-simple $\Q$-algebra $K$ which admits a positive
%  involution. 
  Then $\iota(K)$ contains the center $F$ of the endomorphism
  algebra $\End^0(A)$. 
\end{prop}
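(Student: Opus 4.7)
The strategy is to find a positive integer $N$ with $\pi_A^N\in\iota(K)$ and then use the hypothesis $\End(A)=\ol{\End(\ol A)}$... i.e.\ $\End(A)=\End(\ol A)$, to upgrade this to the full containment $F\subset\iota(K)$, via the observation that under this hypothesis $\Q[\pi_A^N]=F$ for every $N\ge 1$.

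First I would establish that $F=\Q[\pi_A]$ for any abelian variety $A$ over $\F_q$. Decomposing $A\sim \prod_{i=1}^t A_i^{n_i}$ up to isogeny into pairwise non-isogenous simple factors, one has $\End^0(A)\cong\prod_i M_{n_i}(\End^0(A_i))$, so $F=\prod_i Z(\End^0(A_i))=\prod_i \Q(\pi_{A_i})$ with $\pi_A$ corresponding to $(\pi_{A_i})_i$. By the Honda--Tate classification, non-isogenous simple factors over $\F_q$ correspond to non-conjugate Weil $q$-numbers, so the irreducible minimal polynomials of the $\pi_{A_i}$ over $\Q$ are pairwise distinct, hence pairwise coprime. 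Consequently the minimal polynomial of $(\pi_{A_i})_i$ has degree $\sum_i [\Q(\pi_{A_i}):\Q]=\dim_\Q F$, and $\Q[\pi_A]=F$.

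Next, using the hypothesis, $\End^0(A)\subset\End^0(A\otimes\F_{q^m})\subset\End^0(\ol A)=\End^0(A)$ for every $m\ge 1$, whence $\End^0(A\otimes\F_{q^m})=\End^0(A)$. Applying the first step to $A\otimes\F_{q^m}$, whose Frobenius over $\F_{q^m}$ is $\pi_A^m$, I obtain $F=\Q[\pi_A^m]$ for every $m\ge 1$. To find a power of the Frobenius inside $\iota(K)$ I would apply Lemma~\ref{33}: after equipping $(A,\iota)$ with a $K$-linear polarization (which exists by Kottwitz's Lemma 9.2, cf.~Remark~\ref{rem:32}) and, if necessary, replacing $A$ within its $K$-isogeny class so that $\iota(O_K)\subset\End(A)$, Lemma~\ref{33} supplies some $N\ge 1$ with $\pi_A^N\in\iota(K)$. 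Combining these two inputs gives $F=\Q[\pi_A^N]\subset\iota(K)$, which is the desired conclusion.

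The essential use of the hypothesis $\End(A)=\End(\ol A)$ is to force $\Q[\pi_A^N]=F$ at every power $N$; without it the statement fails, as illustrated by a supersingular elliptic curve $E$ over $\F_p$ with $\pi_E^2=-p$ and trivial $K=\Q$-action, where $\pi_E^2\in\Q$ although $F=\Q(\sqrt{-p})\not\subset\Q=\iota(K)$. Once this rigidity step is in hand, the remainder of the proof is a direct combination of the Honda--Tate description of the center of $\End^0(A)$ with Lemma~\ref{33}.
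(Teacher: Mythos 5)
Your proof is correct and follows essentially the same route as the paper: use the hypothesis $\End(A)=\End(\ol A)$ to see that $F$ equals $\Q(\pi_A^n)$ for every $n$ (the center of $\End^0(A\otimes\F_{q^n})$), and then invoke Lemma~\ref{33} to place some power $\pi_A^N$ inside $\iota(K)$. Your extra details (the Honda--Tate/Tate argument that the Frobenius generates the center, and the polarization and $\iota(O_K)$-order adjustments needed to apply Lemma~\ref{33}) are exactly what the paper leaves implicit.
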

\begin{proof}
  Let $\pi$ be the Frobenius endomorphism of $A$. Then for
  any positive integer $n$ one has $F=\Q(\pi^n)$ as $F$ is the center
  of the endomorphism algebra $\End^0(A\otimes_{\Fq} \F_{q^n})$. Now
  using Lemma~\ref{33}, there is a positive integer $n$ such that
  $\pi^n$ is contained in $\iota(K)$. As a result, the center $F$ is
  contained in $\iota(K)$. \qed  
\end{proof}

Now we define a functor $\Phi:\calA_{\Fq}\to \calB^{\rm rig}$ as follows.  
To each abelian variety $A$ over $\Fq$ we associate a tuple
$(F,\pi_A, \ol A,\iota)$, 
where $\pi_A$ is the Frobenius
endomorphism of $A$, $F:=\Q(\pi_A)$, $\ol A:=A\otimes_{\Fq} \Fpbar$ 
and $\iota:F\to \End^0(\ol A)$ 
is the inclusion. Clearly we have the associated map
\begin{equation}
  \label{eq:41}
  \Phi_*:\Hom(A_1,A_2)\otimes \Q \to \Hom_{\calB^{\rm rig}}
(\Phi(A_1),\Phi(A_2))
\end{equation}
as $\varphi\circ \iota_1(\pi_{A_1})= \iota_2(\pi_{A_2}) \circ \varphi$
for any map $\varphi\in\Hom(A_1,A_2)\otimes \Q$.   
   
\begin{thm}\label{42}
  The functor $\Phi:\calA_{\Fq} \to \calB^{\rm rig}$ is fully faithful.
\end{thm}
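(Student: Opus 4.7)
The plan is to reduce the theorem to a familiar Galois descent statement for abelian varieties over finite fields, namely the fact that a homomorphism between base changes to $\Fpbar$ of two abelian varieties defined over $\F_q$ is itself defined over $\F_q$ if and only if it commutes with the $q$-Frobenius endomorphisms.

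For faithfulness, I would simply note that the natural map $\Hom_{\F_q}(A_1,A_2)\otimes \Q \to \Hom_{\Fpbar}(\ol A_1,\ol A_2)\otimes \Q$ is injective for abelian varieties over any field extension, and that $\Phi_{*}$ in \eqref{eq:41} factors through this injection. So this part is automatic.

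For fullness, let $\varphi\in \Hom(\ol A_1,\ol A_2)\otimes \Q$ be a morphism from $\Phi(A_1)=(F_1,\pi_{A_1},\ol A_1,\iota_1)$ to $\Phi(A_2)=(F_2,\pi_{A_2},\ol A_2,\iota_2)$ in $\calB^{\rm rig}$. By the definition of morphisms in $\calB^{\rm rig}$ one has $\varphi\circ f_1(a)=f_2(a)\circ \varphi$ for all $a\in\Q[t]$, where $f_i\colon \Q[t]\to \End^0(\ol A_i)$ sends $t$ to $\pi_{A_i}$. Since the $f_i$ are $\Q$-algebra homomorphisms and $\Q[t]$ is generated by $t$, this condition collapses to the single identity $\varphi\circ \pi_{A_1}=\pi_{A_2}\circ \varphi$. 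I would then invoke the standard Galois descent statement: the action of the topological generator $\sigma\in\Gal(\Fpbar/\F_q)$ on $\Hom_{\Fpbar}(\ol A_1,\ol A_2)\otimes \Q$ sends $\varphi$ to $\pi_{A_2}\circ\varphi\circ\pi_{A_1}^{-1}$ in the isogeny category, so the commutation identity above is exactly Galois-invariance. Consequently $\varphi$ lies in the image of $\Hom_{\F_q}(A_1,A_2)\otimes \Q$, which is surjectivity of $\Phi_{*}$.

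There is no serious obstacle here; the content is really just the compatibility of the definition of morphisms in $\calB^{\rm rig}$ with the Galois-descent criterion for morphisms of abelian varieties over $\F_q$. The only small thing to be careful about is that the generator $x_i=\pi_{A_i}$ chosen in the rigidification is precisely the Frobenius, so that the $\calB^{\rm rig}$-commutation condition matches the descent condition on the nose; if one had chosen a different generator of $F_i$ over $\Q$, one would still recover the Frobenius from it polynomially and the argument would go through verbatim.
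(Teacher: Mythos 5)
Your proposal is correct and is essentially the paper's own argument: injectivity of $\Phi_*$ is immediate, and for fullness the commutation condition in $\calB^{\rm rig}$ reduces to $\pi_{A_2}\circ\varphi=\varphi\circ\pi_{A_1}$, which the paper likewise interprets as $\sigma_q(\varphi)=\pi_{A_2}\,\varphi\,\pi_{A_1}^{-1}=\varphi$, i.e.\ Galois invariance, so $\varphi$ descends to $\Fq$. No substantive difference from the published proof.
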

\begin{proof}
  Let $A_1$ and $A_2$ be two abelian varieties over $\Fq$, and let
  $\ul A_i:=(F_i, \pi_i, \ol A_i,\iota_i)$ be the associated object in
  $\calB^{\rm rig}$ for $i=1,2$. We must show that the associated map
  $\Phi_ *$ in (\ref{eq:41}) is bijective. It is clear that $\Phi_*$
  is injective. Let $\ol f: \ol A_1 \to \ol A_2 $ be an element in
  $\Hom_{\calB^{\rm rig}} (\Phi(A_1),\Phi(A_2))$, particularly 
  $\pi_2 \ol f= \ol f \pi_1$. As $\sigma_q(\ol f)=\pi_2 \ol f
  \pi_1^{-1}=\ol f$, where $\sigma_q\in \Gal(\Fpbar/\Fq)$ is the Frobenius
  map, the morphism $\ol f$ is defined over $\Fq$. \qed  
% the \in \Hom(Take any prime $\ell\neq p$. As
%   $\Q_\ell$ is faithfully flat over $\Q$, it suffices to show the
%   induced map 
% \[ \Phi_\ell: \Hom(A_1,A_2)\otimes \Q_\ell \to \Hom_{\calB^{\rm rig}}
% (\Phi(A_1),\Phi(A_2))\otimes_{\Q} \Q_\ell \]
% is a bijection. By Tate's theorm \cite[Main
% Theorem]{tate:eav} 
% the left hand side is canonically isomorphic to 
% $\Hom_{\Q_\ell[\calG_{\Fq}](V_\ell(A_1), V_\ell(A_2))$, where
%   $\calG_{\Fq}:=\Gal(\Fpbar/\Fq)$. On the other hand, as the Frobenius
%   $\sigma_q\in \calG_{\Fq}$ acts by $\pi_{i}$ on
%   $T_\ell(A_i)=T_\ell(\ol A_i)$, the $F_i
% and right
% hand side are all in bijection with
% Therefore, the map (\ref{eq:41}) is
%   a bijection. \qed 
\end{proof}

\subsection{}
\label{sec:42}
We restrict the functor $\Phi$ to the objects 
for which the endomorphism algebras have a common center. 
Fix any abelian variety $A_0$ over $\Fq$. Let $\pi_0$
be the Frobenius endomorphism of $A_0$
over $\Fq$, $p(t)\in \Z[t]$ its minimal polynomial over $\Q$ and 
$F:=\Q[t]/(p(t))$. A commutative semi-simple
$\Q$-algebra $F$ arising in this way is called 
a {\it $q$-Weil $\Q$-algebra}. 

Let $\calA_{\pi_0, \Fq}$ denote the full subcategory of $\calA_{\Fq}$
consisting of all abelian
varieties $A$ such that the minimal
polynomial of the Frobenius endomorphism of $A$ 
is equal to $p(t)$. 
In other words, every abelian variety $A$ over $\Fq$ in
$\calA_{\pi_0,\Fq}$ shares the same simple components of $A_0$ up to 
isogeny. 
  
Let $\calB_F$ denote the category of basic $F$-abelian varieties over
$\Fpbar$ up to isogeny. Similarly we define a functor
\begin{equation}
  \label{eq:42}
  \Phi_F: \calA_{\pi_0,\Fq} \to \calB_F, \quad A\mapsto (\ol A, \iota), 
\end{equation}
where $\ol A:=A\otimes_{\Fq} \Fpbar$ and $\iota: F\to \End^0(\ol A)$
is the ring monomorphism sending $t$ to $\pi_A$. 
By Theorem~\ref{42}, we obtain the following result.

\begin{prop}\label{43}
  For any $q$-Weil $\Q$-algebra $F=\Q(\pi_0)$, the functor $\Phi_F:
  \calA_{\pi_0,\Fq}\to \calB_F$ is fully faithful. 
\end{prop}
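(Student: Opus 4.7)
The plan is to derive Proposition~\ref{43} as an essentially formal consequence of Theorem~\ref{42}; there is no new content beyond a careful comparison of Hom-sets in the two categories $\calB^{\rm rig}$ and $\calB_F$. I would organize the argument in three short steps.

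First, I unwind the definitions. Given $A_1,A_2\in \calA_{\pi_0,\Fq}$ with common Frobenius minimal polynomial $p(t)$, the objects $\Phi(A_i)=(F_i,\pi_{A_i},\ol A_i,\iota_i)$ live in $\calB^{\rm rig}$, and under the canonical identification $F_i=\Q[t]/(p(t))=F$ sending $t\mapsto \pi_{A_i}$, the distinguished generators on both sides coincide with the image of $t$. Thus any morphism $\varphi\in \Hom_{\calB^{\rm rig}}(\Phi(A_1),\Phi(A_2))$ is, by the definition of $\calB^{\rm rig}$, a quasi-isogeny $\ol A_1\to \ol A_2$ satisfying the single relation $\varphi\circ \iota_1(\pi_{A_1})=\iota_2(\pi_{A_2})\circ \varphi$.

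Next, because $F=\Q(\pi_0)=\Q[\pi_{A_i}]$ is generated over $\Q$ by the image of $t$, this single commutation with $\pi_{A_i}$ propagates automatically to full $F$-linearity: $\varphi\circ \iota_1(a)=\iota_2(a)\circ \varphi$ for every $a\in F$. Consequently there is a natural identification
\[ \Hom_{\calB^{\rm rig}}(\Phi(A_1),\Phi(A_2))\;=\;\Hom_{\calB_F}(\Phi_F(A_1),\Phi_F(A_2)). \]

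Finally, Theorem~\ref{42} gives that $\Phi$ is fully faithful on all of $\calA_{\Fq}$, hence also on the full subcategory $\calA_{\pi_0,\Fq}$. Combining this with the Hom-set identification above yields the bijection
\[ \Hom(A_1,A_2)\otimes \Q\;\isoto\;\Hom_{\calB_F}(\Phi_F(A_1),\Phi_F(A_2)), \]
which is exactly the full faithfulness of $\Phi_F$. There will be no serious obstacle: the entire substance (injectivity via the zero-set of quasi-isogenies and surjectivity via the Galois-descent argument $\sigma_q(\ol f)=\pi_{A_2}\ol f\pi_{A_1}^{-1}=\ol f$) is already contained in Theorem~\ref{42}, so the only thing left to check is that, for $F$ generated over $\Q$ by the single Weil element $\pi_0$, the rigidified-Hom condition and the $F$-linearity condition genuinely coincide.
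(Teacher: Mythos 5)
Your proposal is correct and follows exactly the route the paper takes: the paper simply says "By Theorem~\ref{42}, we obtain the following result," leaving the reader to fill in the identification of Hom-sets, which is precisely the content of your second step (and which the paper foreshadows in the displayed equality $\Hom_F((A_1,\iota_1),(A_2,\iota_2))\otimes_{\Z}\Q = \Hom_{\calB^{\rm rig}}((F,x,A_1,\iota_1),(F,x,A_2,\iota_2))$ just before Proposition~\ref{41}). Your write-up usefully makes explicit the two small checks — that the canonical identifications $F_i=\Q[t]/(p(t))$ carry the distinguished generators to the same element, and that commutation with the single generator $\pi_{A_i}$ propagates to $F$-linearity — but there is no difference in substance or method.
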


\begin{remark}\label{rem:4.5}
The functor $\Phi_F$ is usually not essentially surjective. 
For example take $q=p^2$ and $\pi_0=p \zeta_6$ with $p\equiv 1
\pmod 3$. The corresponding
abelian variety $A_0$ is a simple supersingular abelian surface, and any
object in $\calA_{\pi_0,\Fq}$ is isogenous to a finite product of
copies of $A_0$. However, as $F=\Q(\sqrt{-3})$ and $p$ splits in $F$,
there is an ordinary elliptic curve $E$ over $\Fpbar$ and there is
an isomorphism $i: F\simeq \End^0(E)$. The $F$-elliptic curve $(E,i)$
is clearly in $\calB_F$ but is not isogenous to a finite product of
copies of $A_0$. In this case the functor $\Phi_F$ is not essentially
surjective. A point is that different Weil numbers can generate the
same field.   
% but it does not land in the essential image of the functor $\Phi_F$.   

\end{remark}

\section{A mass formula}
\label{sec:05}

\subsection{Within a simple isogeny class}
\label{sec:51}
\def\Isog{{\rm Isog}}
Let $\pi$ be a Weil $q$-number, $F=\Q(\pi)$ the number field
generated by $\pi$ over $\Q$, and $O_F$ the ring of integers in $F$. 
Let $\Isog(\pi)$ denote the simple isogeny class corresponding to
$\pi$ by the Honda-Tate theory \cite{tate:ht}. 
Let $A_0$ be an abelian variety over $\Fq$ in $\Isog(\pi)$ and 
put $d:=\dim(A_0)$.
%isogeny class corresponding to $\pi$ in the Honda-Tate theory 
%and let $d:=\dim(A_0)$.

Let $\Lambda(\pi)$ denote the set of isomorphism classes of abelian
varieties over $\Fq$ in $\Isog(\pi)$, and 
$\Lambda(\pi)^{\rm max}\subset \Lambda(\pi)$ be the subset consisting of
all abelian varieties $A$ such that the ring $O_F$ is contained in
$\End(A)$. 
Let
$\bfB_{d,O_F}$ denote the set of isomorphism classes of
$d$-dimensional basic $O_F$-abelian varieties over $\Fpbar$. 

The following lemma follows from Proposition~\ref{43}.

\begin{lemma}\label{51}
  The association $A\mapsto (\ol A, \iota)$ induces an injective map 
  $\Phi_{\pi}: \Lambda(\pi)^{\rm max} \to \bfB_{d,O_F}$.
\end{lemma}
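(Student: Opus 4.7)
The plan is to verify that $\Phi_\pi$ is well-defined and then injective; both parts reduce to what has been established earlier in the paper combined with a standard Galois descent.

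For well-definedness, for $A\in\Lambda(\pi)^{\rm max}$ I would set $\ol A := A\otimes_{\F_q}\Fpbar$, note $\dim\ol A=d$, and use the inclusions $O_F\subset \End(A)\subset \End(\ol A)$ to define the $O_F$-action $\iota$. To see $(\ol A,\iota)\in \mathbf{B}_{d,O_F}$, I would invoke Proposition~\ref{41} so that $(A,F\hookrightarrow \End^0(A))$ is basic as an $F$-abelian variety, then use the geometric nature of basicness (Definition~\ref{def:basic}(3)) to pass to $\Fpbar$, and finally apply Remark~\ref{rem:32}, which tells us that basicness of the $O_F$-abelian variety $(\ol A,\iota)$ depends only on the underlying $F$-action. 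A compatible polarization, required in Definition~\ref{def:basic}, exists by Kottwitz's lemma cited in Remark~\ref{rem:32}.

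For injectivity I would argue by Galois descent. Suppose $A_1,A_2\in\Lambda(\pi)^{\rm max}$ and $\ol f:(\ol{A_1},\iota_1)\isoto (\ol{A_2},\iota_2)$ is an $O_F$-linear isomorphism. Since $\pi\in O_F$ and $\iota_i(\pi)=\pi_{A_i}$, the $O_F$-linearity of $\ol f$ yields $\ol f\circ \pi_{A_1}=\pi_{A_2}\circ \ol f$. The $q$-Frobenius $\sigma_q\in\Gal(\Fpbar/\F_q)$ acts on $\Hom(\ol{A_1},\ol{A_2})$ by $\ol f\mapsto \pi_{A_2}\ol f \pi_{A_1}^{-1}$, and hence fixes $\ol f$. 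Therefore $\ol f$ descends to a morphism $f:A_1\to A_2$ over $\F_q$, and this $f$ is an isomorphism because the property of being an isomorphism of abelian varieties is detected after the faithfully flat base change to $\Fpbar$.

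The main point requiring care is well-definedness, namely upgrading ``basic as an $F$-abelian variety'' (what Proposition~\ref{41} produces) to ``basic as an $O_F$-abelian variety'' (what membership in $\mathbf{B}_{d,O_F}$ requires). Once this is in place, injectivity is essentially the same Galois descent already used in the proof of Theorem~\ref{42}, now applied to the distinguished generator $\pi\in O_F$ rather than to an arbitrary element $x$ generating $F$ over $\Q$.
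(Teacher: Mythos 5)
Your proposal is correct and follows essentially the same route as the paper: the paper simply cites Proposition~\ref{43} (whose proof is the Galois-descent computation $\sigma_q(\ol f)=\pi_2\ol f\pi_1^{-1}$ from Theorem~\ref{42}), and you unpack exactly that descent, together with Proposition~\ref{41} and Remark~\ref{rem:32} for well-definedness. Your explicit treatment at the level of honest isomorphisms (rather than morphisms in the isogeny category) is a harmless and slightly more careful rendering of the same argument.
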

\def\Mass{{\rm Mass}}

If $A\in \Lambda(\pi)^{\rm max}$ is an abelian variety over $\Fq$ 
and $(\ol A,\iota)$ the corresponding basic $O_F$-abelian variety over
$\Fpbar$, then clearly any $O_F$-linear polarization $\ol \lambda$ on $(\ol
A,\iota)$ descends uniquely to a polarization $\lambda$ on $A$ over 
$\Fq$. Particularly, the map $\lambda\mapsto \bar \lambda$ gives rise
to a one-to-one correspondence between polarizations on $A$ and
$O_F$-linear polarizations on $(\ol A, \iota)$ over $\Fpbar$. 
It follows that $A$ admits a principal polarization if and
only if $(\ol A, \iota)$ admits a principal $O_F$-linear
polarization. Moreover, we also have a natural 
isomorphism of finite groups 
\begin{equation}
  \label{eq:51}
   \Aut(A,\lambda)\simeq \Aut(\ol A, \ol \lambda, \iota).  
\end{equation}

Now we let $\Lambda(\pi)_1^{\rm max}$ be the set of isomorphism classes of
principally polarized abelian varieties $(A,\lambda)$ over $\Fq$ 
such that the underlying abelian variety $A$ belongs to 
$\Lambda(\pi)^{\rm max}$. The set $\Lambda(\pi)_1^{\rm max}$ could be
empty; nevertheless, it is always finite. This follows from the
finiteness of the set $\calA_{d,1}(\Fq)$ of $\Fq$-rational points of
the Siegel modular variety $\calA_{d,1}$,
% Indeed, the map $\Lambda(\pi)_1^{\rm  max}$ is contained in the
% finite set  

Let $\calA_{d,O_F,1}$ be the moduli space over $\Fpbar$ of 
$d$-dimensional principally polarized $O_F$-abelian varieties, and 
$\bfB_{d,O_F,1}\subset \calA_{d,O_F,1}(\Fpbar)$ be its basic locus.
%isomorphism classes of $d$-dimensional 
%principally polarized basic abelian
%$O_F$-varieties over $\Fpbar$. 
Then the map $\Phi_\pi$ induces an
injective map 
\begin{equation}
  \label{eq:52}
  \Phi_\pi: \Lambda(\pi)^{\rm max}_1 \to \bfB_{d,O_F,1}.
\end{equation}
%The set $\Lambda(\pi)^{\rm max}_1$ is finite as it is contained in the
%$\Fq$-points of the Siegel modular varieties $\calA_{d,1}$. 
We have the following commutative diagram
\[ 
\begin{CD}
  \Lambda(\pi)^{\rm max}_1 @>{\Phi_{\pi}}>> \bfB_{d,O_F,1} \\
  @VVV @VVV \\
  \Lambda(\pi)^{\rm max} @>{\Phi_{\pi}}>> \bfB_{d,O_F}, \\ 
\end{CD}\]
where the vertical maps forget the polarization.

The mass % $\Mass(\Lambda(\pi)^{\rm max}_1)$ 
of $\Lambda(\pi)^{\rm max}_1$ is defined as
\begin{equation}
  \label{eq:53}
  \Mass(\Lambda(\pi)^{\rm max}_1):=\sum_{(A,\lambda)\in
  \Lambda(\pi)^{\rm max}_1} |\Aut(A,\lambda)|^{-1}
\end{equation}
if it is nonempty, and to be zero otherwise. 
Similarly, any finite subset $S\subset \calA_{d,O_F,1}(\Fpbar)$, 
the mass of
$S$ is defined as
\begin{equation}
  \label{eq:54}
  \Mass(S):=\sum_{(\ol A, \ol \lambda,\iota)\in S} |\Aut(\ol A,\ol
\lambda,\iota)|^{-1}
\end{equation}
if $S$ is nonempty and $\Mass(S)=0$ otherwise. 
It follows from (\ref{eq:51}) that 
% we have the equality 
\begin{equation}
  \label{eq:55}
  \Mass(\Lambda(\pi)^{\rm max}_1)=\Mass({\rm Im} \Phi_\pi).
\end{equation}

\subsection{An example with $\pi=\sqrt{p}$.}
\label{sec:52}

We consider a special case of the previous construction 
when $\pi=\sqrt{p}$. The result we obtain is the following.

\begin{thm}\label{52}
  Let $\pi=\sqrt{p}$. Then 
  the finite set $\Lambda(\pi)_1^{\rm max}$ is
  nonempty and we have
  \begin{equation}
    \label{eq:56}
    \Mass(\Lambda(\pi)_1^{\rm max}=\frac{1}{4}
    \, \zeta_{\Q(\sqrt{p})}(-1).  
  \end{equation}
% where $o=8$ or $1$ according to whether $p=2$ or not. 
\end{thm}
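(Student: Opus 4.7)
The plan is to translate the computation of $\Mass(\Lambda(\pi)_1^{\rm max})$ via the embedding $\Phi_\pi$ of (\ref{eq:52}) into a mass computation on a finite Hecke orbit in the basic locus $\bfB_{2,O_F,1}$ of the Hilbert modular surface $\calA_{2,O_F,1}$ attached to $F=\Q(\sqrt{p})$, and then invoke the mass formula of \cite{yu:mass_hb}. First I would verify that $\Lambda(\pi)_1^{\rm max}\neq\emptyset$: since $\pi^2=p$, any $A_0\in \Isog(\pi)$ is a simple supersingular abelian surface over $\F_p$ whose base change $\ol{A_0}$ is superspecial, and $p$ is totally ramified in the real quadratic field $F$ with $(p)=\mathfrak{p}^2$. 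A principal $O_F$-linear polarization on $(\ol{A_0},\iota_0)$ exists by standard facts about superspecial Hilbert--Blumenthal abelian surfaces at ramified primes (compare \cite{yu:mass_hb}); by the remark preceding (\ref{eq:51}) it descends uniquely to a principal polarization $\lambda_0$ on $A_0$ over $\F_p$, giving a class in $\Lambda(\pi)_1^{\rm max}$.

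Next I would show that the image of $\Phi_\pi$ is a single finite Hecke orbit $S$ inside the superspecial locus of $\calA_{2,O_F,1}$. Since $\mathfrak{p}$ is the only prime of $F$ above $p$ and $\bar{\mathfrak{p}}=\mathfrak{p}$, Lemma \ref{31}(2) forces the isocrystal $N_\mathfrak{p}$ of any $(\ol A,\iota)\in \bfB_{2,O_F}$ to be supersingular of slope $1/2$; together with the $O_{F,\mathfrak{p}}$-action, a direct computation on the \dieu module gives $a$-number $2$, so $\bfB_{2,O_F,1}$ lies inside the superspecial locus. By Corollary \ref{37}, all objects of $\bfB_{2,O_F}$ with $\lambda_\mathfrak{p}=1/2$ are $F$-isogenous, and I would check that $\Phi_\pi$ surjects onto a single prime-to-$p$ Hecke orbit by arguing that any $F$-linear prime-to-$p$ isogeny between two such objects transports the condition ``$\iota(\sqrt{p})=\pi_A$ for some $\F_p$-descent $A$'' from source to target, applying Theorem \ref{42} at the level of $\F_p$-morphisms.

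Combining (\ref{eq:51}) and (\ref{eq:55}) then yields $\Mass(\Lambda(\pi)_1^{\rm max})=\Mass(S)$, and the geometric mass formula of \cite{yu:mass_hb} for a superspecial Hecke orbit on the Hilbert modular surface attached to a real quadratic $F$ with $p$ ramified gives $\Mass(S)=\zeta_F(-1)/4$, which is (\ref{eq:56}). The main obstacle is the middle step: precisely pinning down the image of $\Phi_\pi$ as a single Hecke orbit, since a priori not every $(\ol A,\ol\lambda,\iota)\in \bfB_{2,O_F,1}$ descends to $\F_p$ with Frobenius exactly $\iota(\sqrt{p})$, so one must verify that the requisite Galois-cohomological twist always exists; equally delicate is confirming that the numerical constant extracted from the formula of \cite{yu:mass_hb} is precisely $1/4$, which requires careful matching of normalizations and comparison of automorphism groups via (\ref{eq:51}).
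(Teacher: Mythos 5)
Your overall strategy — push $\Lambda(\pi)_1^{\rm max}$ forward via $\Phi_\pi$ into the superspecial locus of the Hilbert modular surface attached to $F=\Q(\sqrt{p})$ and then invoke the mass formula from \cite{yu:mass_hb} — is the same as the paper's. However, there is a genuine gap exactly where you flag one. You propose to identify the image of $\Phi_\pi$ as a single prime-to-$p$ Hecke orbit by transporting the descent condition ``$\iota(\sqrt{p})=\pi_A$ for an $\Fp$-form'' along $F$-linear isogenies, and you yourself note that this requires ruling out a Galois-cohomological obstruction. That step is not worked out, and it is not clear it can be made to work directly (a priori an $\Fpbar$-object in the right isogeny class need not admit any $\Fp$-form with Frobenius $\iota(\sqrt{p})$). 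The paper sidesteps this entirely with a \emph{counting} argument: $\Phi_\pi$ is injective and factors through the set $\bfS$ of superspecial $O_F$-abelian surfaces of Lie type $(1,1)$; Waterhouse \cite[Theorem 6.2]{waterhouse:thesis} identifies $\Lambda(\pi)^{\rm max}$ with $\Cl(\End(A_0))$, while \cite[Lemma 3.1, Theorem 2.1]{yu:mass_hb} identifies $\bfS$ with $\Cl(\End_{O_F}(\ol A_0))$; since $\End(A_0)=\End_{O_F}(\ol A_0)$, the two sets have equal cardinality and $\Phi_\pi:\Lambda(\pi)^{\rm max}\to\bfS$ is a bijection. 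This bypasses any descent argument. You should adopt this route rather than the twist argument.

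A secondary inaccuracy: you assert that all of $\bfB_{2,O_F,1}$ is superspecial because the lone prime $\mathfrak{p}$ is fixed by the involution. That is not right — the basic (equivalently, supersingular) locus of the Hilbert modular surface at a ramified prime is positive-dimensional and generically not superspecial. What is true, and what the paper uses, is that the image of $\Phi_\pi$ lands in the superspecial Lie type $(1,1)$ subset $\bfS$, because for $A\in\Lambda(\pi)^{\rm max}$ the Dieudonn\'e module is $\calO^2$ with $\calF$ and $\calV$ both acting as multiplication by $\sqrt{p}$, giving $M/\calV M\simeq\Fp\oplus\Fp$. The superspeciality is a feature of the image, not of the whole basic locus. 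Finally, you should check that the polarized lifting $\Lambda(\pi)^{\rm max}_1\to\bfS_1$ is again a bijection; the paper argues that the forgetful square is cartesian because equivalence classes of principal polarizations on $A$ over $\Fp$ match equivalence classes of $O_F$-linear principal polarizations on $(\ol A,\iota)$, and nonemptiness of $\bfS_1$ comes from Proposition~\ref{53}.
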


We need a general result. 

%We refer to \cite[Section 1]{yu:reduction}
%for \dieu modules and \dieu modules with additional structures. 

\begin{prop}\label{53}
  Let $F$ be a totally real field, $\calO:=O_F\otimes_{\Z} \Zp$ and 
$k$ an \ac field of \ch $p>0$.

%  \begin{enumerate}
{\rm (1)} Let $\ul M=(M,\<\,, \>, \iota_M)$ be a 
    supersingular separably
    quasi-polarized \dieu $\calO$-module
    over $k$ satisfying the following condition
\[ (*)\quad \tr(\iota_M(a))\cdot 
[F:\Q]=(\rank_W M)\cdot \tr_{F/\Q}(a), \quad
\forall\, a\in O_F. \] 
   Then there is a supersingular 
    principally polarized $O_F$-abelian
    variety $\ul A=(A,\lambda,\iota)$ 
    over $k$ whose \dieu module $M(\ul A)$ is isomorphic to $\ul M$.

% such that the associated \dieu module $M(\ul A)$ with
%     additional 
%     structures is isomorphic to $\ul M$.

{\rm (2)} Assume that $p$ is totally ramified in $F$.
% and that $\dim A=[F:\Q]$. 
    Then for any supersingular \dieu $\calO$-module $\ul
    M=(M,\iota_M)$  over
     $k$ of $W$-rank $2[F:\Q]$, 
    there is a principally polarized $O_F$-abelian variety 
    $\ul A=(A,\lambda,\iota)$ over $k$ such that the
    \dieu $\calO$-module $M(A,\iota)$ 
    is isomorphic to $\ul M$.  
% Then any
%     \dieu $O_F\otimes \Zp$-module over $k$
%     $(M,\iota_M)$ over $k$ admits a self-dual $O_F\otimes \Zp$-linear
%     quasi-polarization $\<\,, \>$. 
% In particular, for any
%     supersingular \dieu $O_F\otimes \Zp$-module $(M,\iota_M)$ over
%     $k$, there is a principally polarized $O_F$-abelian variety 
%     $\ul A=(A,\lambda,\iota)$ over $k$
%     supersingular \dieu $O_F\otimes \Zp$-module admits aa self-dual
%     $O_F\otimes \Zp$-linear quasi-polarization. Particularly,
%     $(A,\iota)$ admits a principal $O_F$-linear
%     polarization.  
%  \end{enumerate} 
\end{prop}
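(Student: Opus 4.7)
The plan is to prove both parts by constructing a model supersingular principally polarized $O_F$-abelian variety and transferring its structure along a quasi-isogeny, using Serre-Tate deformation theory together with the classical equivalence between $p$-divisible groups over the perfect field $k$ and their \dieu modules, under which quasi-isogenies correspond to quasi-isogenies of \dieu modules.

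For part (1), I would first exhibit a supersingular principally polarized $O_F$-abelian variety $\ul A_0=(A_0,\lambda_0,\iota_0)$ over $k$ whose \dieu module $\ul M_0$ satisfies the same condition (*). Such a model can be built by taking a suitable power of a supersingular elliptic curve over $k$ and equipping it with an $O_F$-action of the equidistributed signature dictated by (*), together with an $O_F$-linear principal polarization constructed explicitly. Since both $\ul M$ and $\ul M_0$ are supersingular quasi-polarized $\mathcal O$-\dieu modules of the same $W$-rank with matching $O_F$-trace, the classification of basic $F$-isocrystals (Lemma~\ref{31}) yields an $F\otimes_{\Q} L$-linear isomorphism $\ul M\otimes L \simeq \ul M_0\otimes L$ preserving Frobenius and pairing up to a scalar in $\Q^\times$, which can be absorbed by rescaling $\lambda_0$. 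This quasi-isogeny $\phi:\ul M_0\to \ul M$ corresponds, under covariant \dieu theory, to an $O_F$-linear quasi-isogeny of $p$-divisible groups $A_0[p^\infty]\to X$ with $M(X)=M$; after multiplying by a power of $p$ this is an honest isogeny with kernel a finite flat $O_F$-stable subgroup scheme $K\subset A_0$, and we set $A:=A_0/K$. The induced $O_F$-action and descended polarization give a principally polarized $O_F$-abelian variety $\ul A=(A,\lambda,\iota)$ with $M(\ul A)\simeq \ul M$; principality follows from the self-duality of $M$ under the prescribed pairing.

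For part (2), total ramification of $p$ in $F$ implies $\mathcal O:=O_F\otimes_{\Z}\Zp$ is a complete DVR with residue field $\F_p$, and hence $\mathcal O\otimes_{\Zp} W$ is also a DVR. Thus any $\mathcal O$-\dieu module $M$ of $W$-rank $2[F:\Q]$ is free of rank $2$ over $\mathcal O\otimes_{\Zp} W$, and the trace condition (*) is automatic. It then remains to produce an $O_F$-linear principal quasi-polarization on $\ul M$: on the rational level, the basic isocrystal $M\otimes L$ admits an $F$-linear alternating Frobenius-equivariant pairing (constructed from the standard model of an $F$-linear supersingular isocrystal of rank $2$), and freeness of $M$ over the DVR $\mathcal O\otimes W$ allows us to rescale this pairing by a unit so that it becomes a standard perfect symplectic form on $\mathcal O^{\oplus 2}\otimes W$, i.e.\ an $O_F$-linear principal pairing on $M$. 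Part (1) then produces the desired principally polarized $O_F$-abelian variety.

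The main obstacle will be constructing the model $(A_0,\lambda_0,\iota_0)$ in part (1) that realizes the prescribed trace condition (*): this requires a careful choice of $O_F$-action and $O_F$-linear principal polarization on a power of a supersingular elliptic curve, guided by the signature encoded in (*). Once this model is in hand, the rest of the argument reduces to standard quasi-isogeny manipulations for supersingular $p$-divisible groups with additional structure, while the total ramification hypothesis in part (2) is exactly what is needed to guarantee integral rigidity of the polarization on a given $\mathcal O$-\dieu module.
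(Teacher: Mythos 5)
Your overall strategy for part (1)---construct a model $(A_0,\lambda_0,\iota_0)$ and transport structure to $\ul M$ along a $p$-power quasi-isogeny---is genuinely different from the paper's. The paper instead invokes \cite[Theorem~1.1]{yu:c}, which directly produces a polarized $O_F$-abelian variety $(A,\lambda,\iota)$ with $M(\ul A)\simeq\ul M$ and $\deg\lambda$ prime to $p$, and then repairs the polarization by replacing $T_\ell(A)$ with a self-dual $(O_F\otimes\Z_\ell)$-lattice for every $\ell\ne p$, leaving the $p$-divisible group untouched. The two routes modify the abelian variety at complementary sets of places: the paper adjusts away from $p$ while keeping the Dieudonn\'e module on the nose, whereas you adjust at $p$ starting from a model whose structure away from $p$ is already correct.

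However, your version has a genuine gap that you yourself flag but do not close: the existence of the model $(A_0,\lambda_0,\iota_0)$ with an $O_F$-linear \emph{principal} polarization and a supersingular Dieudonn\'e $\calO$-module of the prescribed signature. This is not a triviality; it is essentially the nontrivial content that \cite[Theorem~1.1]{yu:c} provides. In particular, equipping $E^{[F:\Q]}$ with $O_F$ acting by the regular representation does not make the product polarization $O_F$-linear (the regular-representation matrices are not symmetric), so the ``explicit construction on a power of a supersingular elliptic curve'' is not as direct as claimed. A second, smaller gap: after forming $A:=A_0/K$, you assert principality ``follows from the self-duality of $M$,'' but the isomorphism of isocrystals you produce only matches the pairings up to a scalar in $\Q^\times$; if that scalar has nonzero $p$-adic valuation, or if $\phi(M_0)\ne M$, the descended quasi-polarization need not be an isomorphism on Tate/Dieudonn\'e modules, and you have also not verified that the descended symmetric quasi-homomorphism is actually a polarization (positivity), which is automatic for the paper's argument because it starts from an honest polarization and only rescales lattices prime to $p$.

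For part (2), your observations that $\calO\otimes_{\Zp}W$ is a DVR, that $M$ is free of rank $2$, and that condition $(*)$ is automatic match the paper's reasoning. But where the paper cites \cite[Proposition~2.8]{yu:reduction} to produce a separable $\calO$-linear quasi-polarization, you sketch an ad hoc construction (pick a pairing on the standard rank-$2$ model and rescale). This is plausible in outline, but one must check compatibility of the pairing with Frobenius and Verschiebung on the integral lattice, not merely at the isocrystal level; as written, this step is heuristic. Both parts of your argument would be repaired most efficiently by invoking the same two external results the paper uses, which suggests the cited inputs are essentially unavoidable for a clean proof.
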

\begin{proof}
  (1) By \cite[Theorem 1.1]{yu:c}, there is a (prime-to-$p$ degree) 
   polarized $O_F$-abelian
   variety $\ul A=(A,\lambda,\iota)$ such that $M(\ul A)\simeq
   \ul M$. % To adjust the polarization $\lambda$ be principal, 
   We can choose a self-dual $(O_F\otimes \Z_\ell)$-lattice $L_\ell$ in
   $V_\ell(A)$  with respect to $e_{\lambda,\ell}$ for each prime
   $\ell\neq p$ with
   $L_\ell=T_\ell(A)$ for almost all $\ell$. The proof of 
   the existence of such a lattice $L_\ell$ 
   is elementary and left to the reader. 
   % We need
   % an elementary fact that 
   % for each prime $\ell \neq p$,
   % there exists a self-dual $\Z_\ell$-valued symplectic $O_F\otimes
   % \Z_\ell$-lattice $(L_\ell,\psi_\ell)$ with the property that
   % $\varphi_\ell: (L_\ell \otimes_{\Z_\ell} \Q_\ell, \psi_\ell)\simeq
   % (V_\ell(A), e_{\lambda,\ell})$. 
   % Note that any two symplectic $F_\ell$-spaces are isomorphic 
   % provided they are $F_\ell$-free of the same rank. 
% $O_F\otimes\Z_\ell$-free 
%   self-dual skew-Hermitian $O_F\otimes \Z_\ell$-module 
%to the reader.
   Then there is an $O_F$-abelian variety $(A',\iota')$ and a
   prime-to-$p$ degree $O_F$-linear quasi-isogeny 
   $\varphi': (A',\iota')\to (A,\iota)$ 
   % $\varphi'^* \lambda=\lambda'$ 
   such that $\varphi'_*(T_\ell(A'))=
   L_\ell$ for all $\ell\neq p$. 
   Then the pull-back $\lambda':=\varphi^* \lambda$ by $\varphi$ 
   is a principal polarization as $L_\ell$ is self-dual. 
   The object $(A',\lambda',\iota')$ is a desired one.  
%   Choose any $O_F$-linear polarization $\lambda$ on $(A,\iota)$.
%      The statement follows from the basic facts that there is a self-dual
%      skew-Hermitian $O_{F}\otimes \Z_\ell$-module
%      $(L_\ell,\psi_\ell)$ and 
%      $(L_\ell\otimes
%      \Q_\ell,\psi_\ell)\simeq (V_\ell(A),e_\lambda)$, and that
%      $(N(A),,\<\, ,\>, \iota)\simeq (N(A),\iota, \lambda)$.

  (2) Since there is only one prime of $O_F$ over $p$, the condition
      ($*$) is satisfied. 
      By \cite[Proposition 2.8]{yu:reduction}, the \dieu $\calO$-module 
      $\ul M$ admits a separable $\calO$-linear quasi-polarization,
      noting that    
      an equivalent condition (5) of loc. cit. 
      is satisfied when $p$ is
      totally ramified. Then the statement follows from (1). \qed
\end{proof}

% If $(A,\iota)$ is a superspecial $O_F$-variety 
%\ \\ 
%It seems that the $O_F$-maximality is not sufficient to make object be
%minimal at $p$. 
%For example the Moret-Bailly family would naturally carry an
%$O_F\otimes \Zp$-action. 

%\ \\

Now we return to our case $F=\Q(\sqrt{p})$, where $\calO=O_F\otimes
\Zp=\Zp[\sqrt{p}]$. 
The prime $p$ is ramified in $F$ with
ramification index $e=2$. 
Clearly any member $A$ in 
$\Lambda(\pi)^{\rm max}$ is a superspecial abelian surface over
$\Fp$. The \dieu module $M=M(A)$ of $A$ is 
a rank 4 free $\Zp$-module together with a $\Z_p$-linear action by 
$O_F$. Therefore, $M\simeq \calO^2$ on which both the Frobenius
$\calF$ and the Verschiebung 
$\calV$ operate by $\sqrt{p}$. 
From this the Lie algebra $\Lie(A)=M/\calV M$ of $A$ is isomorphic to 
$\Fp\oplus \Fp$ as an $(O_F/p)$-module. 
In other words, $A$ has Lie type $(1,1)$ in the terminology
of \cite[Section 1]{yu:reduction}. Therefore, the injective map 
$\Phi_\pi: \Lambda(\pi)^{\rm max}\to \bfB_{2,O_F}$ factors through the
subset $\bfS\subset \bfB_{2,O_F}$ of superspecial abelian
$O_F$-surfaces of Lie type $(1,1)$. 

%\begin{lemma}\label{54}
  
%\end{lemma}
%\begin{proof}
We first claim that the induced map 
\begin{equation}
  \label{eq:58}
  \Phi_\pi:\Lambda(\pi)^{\rm max}\to \bfS
\end{equation}
is bijective. Fix a member $A_0\in \Lambda(\pi)^{\rm max}$. 
By Waterhouse \cite[Theorem 6.2]{waterhouse:thesis}, there is a natural 
bijection between the set $\Lambda(\pi)^{\rm max}$ and the 
set $\Cl(\End(A_0))$ of right ideal classes. Since the map $\Phi_\pi$
is injective, it suffices to show that $\bfS$ has the same cardinality
as $\Cl(\End(A_0))$. 
Note that the isomorphism classes of (unpolarized) 
{\it superspecial} \dieu $\calO$-modules are uniquely 
determined by their Lie types \cite[Lemma 3.1]{yu:mass_hb}. 
It follows that the \dieu modules and Tate modules of 
any two members in $\bfS$ are mutually isomorphic (compatible 
with the actions of $O_F$). 
By (the unpolarized variant of) \cite[Theorem 2.1]{yu:mass_hb}, 
there is a natural bijection 
$\bfS\simeq \Cl(\End_{O_F}(\ol A_0))$. 
Since we have $\End(A_0)=\End_{O_F}(\ol A_0)$, our claim is
proved.     
%\end{proof}
 
Let $\bfS_1\subset \bfB_{2,O_F,1}$ be the subset consisting 
of objects $(A,\lambda,\iota)$ so that the underlying 
abelian $O_F$-surface $(A,\iota)$ belongs to
$\bfS$. Proposition~\ref{53} implies that $\bfS_1$ is
nonempty. 
%Suppose $(\ol A_0, \ol \lambda_0,\iota_0)$ is an object in
%$\bfS_1$. Then there is an object $A_0$ so that 
%$\Phi_\pi(A_0)\simeq (\ol A_0,\iota_0)$ and thus the polarization $\ol
%\lambda_0$ descends to a polarization $\lambda_0$ on $A_0$. This shows
%that the set $\Lambda(\pi)^{\rm max}_1$ is nonempty. 
Consider the
commutative diagram
\begin{equation}
  \label{eq:59}
  \begin{CD} 
   \Lambda(\pi)^{\rm max}_1 @>\Phi_{\pi}>> \bfS_1 \\
   @V{f_\Lambda}VV @V{f_\bfS}VV \\
   \Lambda(\pi)^{\rm max} @>\Phi_{\pi}>{\simeq}> \bfS \\ 
  \end{CD}
\end{equation}
Note that a member $A$ in $\Lambda(\pi)^{\rm max}$ admits a principal
polarization if and only if $\Phi_\pi(A)=(\ol A, \iota)$ admits a
principal $O_F$-linear polarization. Moreover, the equivalence classes
of principal polarizations on $A$ are in bijection with  
the equivalence classes
of principal $O_F$-linear polarizations on $(\ol A, \iota)$. It
follows that
the diagram (\ref{eq:59}) is cartesian, which particularly implies that
the map $\Phi_\pi: \Lambda(\pi)^{\rm max}_1\simeq \bfS_1$ is an
isomorphism. Thus, we have proved $\Mass(\Lambda(\pi)^{\rm
  max}_1)=\Mass(\bfS_1)$. 

Now we use the mass formula for $\Mass(\bfS_1)$ 
\cite[Theorem 3.7]{yu:mass_hb}\footnote{There is an error in the
  computation of the mass formula there. The error occurs in Lemma
  3.4 of loc. cit., where the unramified quadratic order $O_{\bfF_\grp'}$
  of $O_{\bfF_\grp}$ cannot be 
  written as $O_{\bfF_\grp}[\sqrt{c}]$ when $p=2$ as stated. As a result,
  the order $A_\epsilon$ when $\epsilon=0$ as in Lemma 3.4
  should be maximal, and the term $o_\grp$ should be always one in 
  that paper, particularly in the formulas of Theorems 4.4 and 4.5.} 
\begin{equation}
  \label{eq:5.10}
  \Mass(\Lambda(\pi)^{\rm max}_1)=\Mass(\bfS_1)=\frac{1}{4}
  \zeta_F(-1); 
\end{equation}
%where $o=8$ or $1$ according as $p=2$ or not
this proves Theorem~\ref{52}.

\subsection{Fibers of the map $f_{\bfS}$}
\label{sec:53}

We describe the fibers of the map $f_{\bfS}$ in (\ref{eq:59}). 
Suppose
$(A,\lambda_0,\iota)$ is a member in $\bfS_1$. Put
$D:=\End^0_{O_F}(A)$ and $O_D:=\End_{O_F}(A)$. Then $D$ is the
quaternion $F$-algebra ramified only at the two real places of $F$ and
$O_D$ is a maximal order. Note that the canonical involution $'$ 
is the unique positive involution on $D$. Therefore the Rosati
involution induced by any $O_F$-linear polarization must be $'$. 
Suppose $\lambda$ is another $O_F$-linear principal polarization, then
$\lambda=\lambda_0 a$ for some totally positive  
symmetric element $a\in O_D^\times$,   
so $a\in O_{F,+}^\times$, 
the set of totally positive units in
$O_F$. Suppose $b\in \Aut_{O_F}(A)$ is an $O_F$-linear
automorphism. Then the pull-back 
\[ b^* (\lambda_0 a)=b^t \lambda_0 a  b=\lambda_0 \lambda_0^{-1} b^t \lambda_0
b a =\lambda_0 (b' b) a . \]
Therefore, the set of equivalence classes of principal $O_F$-linear
polarizations on $(A,\iota)$ is in bijection with the set
$O_{F,+}^\times /\Nr(O_D^\times)$, where $\Nr: O_D\to O_F$ is the reduced
norm. In other words, we obtain an isomorphism 
\begin{equation}
  \label{eq:5.11}
  f_{\bfS}^{-1}(A,\iota)\simeq O_{F,+}^\times /\Nr(O_D^\times).
\end{equation}
As $\Nr(O_D^\times)\supset (O_F^\times)^2$, 
the group $O_{F,+}^\times /\Nr(O_D^\times)$ is a
homomorphism image of $O_{F,+}^\times /(O_F^{\times})^2$. 
The latter group has
1 or 2 elements according as the fundamental unit $\epsilon$ 
of $F$ has norm $-1$ or not. Therefore, if $N(\epsilon)=-1$, then 
$f_{\bfS}^{-1}(A,\iota)$ has one element. Otherwise, the fiber 
$f_{\bfS}^{-1}(A,\iota)$ has at most two elements. \\

% (?)In any case, there is some member 
% $(A',\lambda',\iota')\in \bfS_1$ so
% that the fiber $f_{\bfS}^{-1}(A',\iota')$ has only one element. This
% follows from the local-global principal form the norm map that there
% is an element $b\in D^\times$ whose norm is equal to $\epsilon$ (in
% the case $N(\epsilon)=1$). Then the element $b$ is contained in some
% maximal order of $D$. But we need to check that this maximal order
% admits a principal $O_F$-linear polarization. 

\section*{Acknowledgments}
Theorem~\ref{11} is a result of an old (unpublished) 
preprint \cite{yu:basic} which
was prepared during the author's stay in the Max-Planck-Institut f\"ur
Mathematik in Bonn. The author is grateful to Ching-Li Chai for the
idea of the proof of Theorem 1.1 and thanks the MPIM for its kind
hospitality and excellent conditions. The author thanks Jiangwei Xue
for his patient clarifications and helpful comments 
on a revised version of the manuscript. 
% the author learned its proof from Ching-Li Chai. 
% which also bears
% ideas of the work of Rapoport and Zink \cite[Chapter 6]{rapoport-zink}. 
% It is his great pleasure to thank him. 
The author is partially supported by the grants 
MoST 100-2628-M-001-006-MY4 and 104-2115-M-001-001-MY3. He also thanks
the referee for careful reading and helpful comments which improve the
exposition of the manuscript.

\end{document}